\newtheorem{theorem}{Theorem}
\newtheorem{lemma}[theorem]{Lemma}
\newtheorem{remark}[theorem]{Remark}
\newtheorem{definition}[theorem]{Definition}
\newtheorem{conjecture}{Conjecture}
\theoremstyle{definition}
\newcommand{\cB}{\mathcal{B}}
\newcommand{\cD}{\mathcal{D}}
\newcommand{\cG}{\mathcal{G}}
\newcommand{\cH}{\mathcal{H}}
\newcommand{\cN}{\mathcal{N}}
\newcommand{\cT}{\mathcal{T}}
\newcommand{\cU}{\mathcal{U}}
\newcommand{\cV}{\mathcal{V}}
\newcommand{\cW}{\mathcal{W}}
\newcommand{\e}{\varepsilon}
\newcommand{\cQ}{\mathcal{Q}}
\def\set4{\mathcal I}
\def\tup14{(1,2,3,4)}
\newcommand{\supp}{\mathrm{supp}}
\newcommand{\R}{\mathbb{R}}
\newcommand{\de}{\delta} 
\newcommand{\wt}{\widetilde}
\newcommand{\dist}{\textup{dist}}
\newcommand{\BL}{\textup{BL}}
\newcommand{\dir}{\textup{dir}}
\newcommand{\tx}{\textup}
\begin{document}

 \author{Shengwen Gan}
 \address{Department of Mathematics\\
 Massachusetts Institute of Technology\\
 Cambridge, MA 02142-4307, USA}
 \email{shengwen@mit.edu}

\keywords{radial projection, exceptional estimate}
\subjclass[2020]{28A75, 28A78}

\date{}

\title{Hausdorff dimension of unions of $k$-planes}
\maketitle

\begin{abstract}
We prove a conjecture of H\'era on the dimension of unions of $k$-planes. Let $0<k \le d<n$ be integers, and $\beta\in[0,k+1)$. If $\cV\subset A(k,n)$, with $\textup{dim}(\cV)=(k+1)(d-k)+\beta$, then $\textup{dim}(\bigcup_{V\in\cV}V)\ge d+\min\{1,\beta\}$. The proof combines a recent idea of Zahl and the Brascamp-Lieb inequality.
\end{abstract}

\section{Introduction}

In \cite{hera2018hausdorff}, H\'era made the following conjecture (see Conjecture 1.16 in \cite{hera2018hausdorff}). We slightly change the notation and state it in the following equivalent way.

\begin{conjecture}\label{conj1}
Let $0<k\le d<n$ be integers and $\beta\in [0,k+1]$. Let $\cV\subset A(k,n)$ be a set of $k$-planes in $\R^n$, with $\dim(\cV)= (k+1)(d-k)+\beta$. Here, $A(k,n)$ denotes the affine Grassmannian in $\R^n$ and $\dim(\cV)$ means the Hausdorff dimension of $\cV$ as a subset of the Riemannian manifold $A(k,n)$. Then,
\begin{equation}\label{mainbound}
    \dim(\bigcup_{V\in\cV}V)\ge d+\min\{1,\beta\}.
\end{equation}  

\end{conjecture}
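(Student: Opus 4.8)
The plan is to argue by contradiction in a discretized form and run an induction on the ambient dimension $n$: the Brascamp--Lieb inequality supplies the sharp bound when the planes point in transverse directions, while the inductive hypothesis absorbs the complementary concentrated case, the bookkeeping of which is where I expect to import the recent idea of Zahl. The first step is to reduce \eqref{mainbound} to a single-scale estimate. Placing a Frostman measure of exponent $(k+1)(d-k)+\beta$ on $\cV$ and pigeonholing to a scale at which the measure is roughly self-similar, it suffices to show: if $\cV_\delta\subset A(k,n)$ is $\delta$-separated with $\#\cV_\delta\approx\delta^{-(k+1)(d-k)-\beta}$ and obeys the non-concentration bound $\#(\cV_\delta\cap B(V,r))\lesssim (r/\delta)^{(k+1)(d-k)+\beta}$ for all $V$ and all $r\ge\delta$, then the slabs $T_V:=N_\delta(V\cap B(0,1))$ satisfy $|\bigcup_V T_V|\gtrsim_\varepsilon \delta^{\varepsilon}\,\delta^{\,n-d-\min\{1,\beta\}}$ for every $\varepsilon>0$. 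Recovering the Hausdorff-dimension lower bound from this is standard once one arranges the non-concentration hypothesis to persist in every ball and at every scale, as in H\'era's work and the Furstenberg-set literature.

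For the discretized estimate I would run a Bourgain--Guth type broad--narrow dichotomy, carried out on the \emph{slabs} rather than on the Grassmannian. After localizing to balls of an intermediate radius $\rho=\delta^\theta$ and pigeonholing, either (broad) the planes meeting a typical $\rho$-ball point in quantitatively transverse directions --- so one can select finitely many of them whose quotient maps $\pi_{V_i}\colon\R^n\to\R^n/V_i$ form a Brascamp--Lieb datum (with exponents satisfying the scaling identity $\sum_i c_i(n-k)=n$) whose Brascamp--Lieb constant is $O_\theta(1)$ --- or (narrow) within a definite fraction of the $\rho$-balls the relevant slabs are confined to a $\rho$-neighbourhood of an affine subspace $W$ with $\dim W=w<n$. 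In the broad case one feeds the datum into the Brascamp--Lieb inequality (equivalently, the $k$-plane analogue of the Bennett--Carbery--Tao multilinear Kakeya estimate), which bounds the overlap function $\sum_V\chi_{T_V}$ and forces $|\bigcup_V T_V|\gtrsim\delta^{\varepsilon}$; since $d<n$ this already exceeds the target. Points covered only by slabs lying outside every transverse tuple fall into a lower-complexity sub-configuration, so iterating the dichotomy $O_\varepsilon(1)$ times terminates in a genuinely broad or genuinely narrow configuration.

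In the narrow case, projecting $\bigcup_V T_V$ orthogonally onto $W\cong\R^w$ produces (up to harmless $\delta$-fattening) a union of $\delta$-slabs of $k$-planes $\cV'\subset A(k,w)$, and the $\delta^{\,n-w}$-thick fibers give $|\bigcup_V T_V|\gtrsim \delta^{\,n-w}\,|\bigcup_{V'\in\cV'}N_\delta(V')|$. Since $A(k,W)$ sits in $A(k,n)$ with codimension $(k+1)(n-w)$, the non-concentration bound forces $w\ge d+1$ (a slab with $w\le d$ is too small to carry a definite fraction of $\cV_\delta$, because $\beta<k+1$), so the inductive hypothesis is available in $\R^w$ with the \emph{same} value of $\beta$ when a single slab dominates, and the factor $\delta^{\,n-w}$ restores the ambient exponent exactly. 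The real work --- and the point where I expect the main difficulty, and where Zahl's idea should enter --- is the intermediate regime in which $\cV_\delta$ is distributed over several disjoint narrow slabs: one must apportion the count $\delta^{-(k+1)(d-k)-\beta}$ among the slabs, control how the slabs themselves are spread (again via a Brascamp--Lieb or radial-projection estimate), and verify that the effective exponent passed to the inductive step stays at least $\min\{1,\beta\}$, i.e.\ that the per-slab loss in the plane count is compensated by the number and transversality of the slabs. Secondary obstacles are the verification that the broad datum is non-degenerate with a scale-uniform Brascamp--Lieb constant --- invoking the continuity of that constant --- and the passage from the single-scale estimate back to Hausdorff dimension with the non-concentration hypothesis maintained throughout.
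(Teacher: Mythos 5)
Your high‑level ingredients match the paper's: discretize via a Frostman measure, reduce to a single‑scale estimate for $\delta$‑slabs with a non‑concentration hypothesis, run a Bourgain--Guth broad--narrow dichotomy, and use Brascamp--Lieb in the broad case. But the inductive backbone you propose is different from the one the paper actually runs, and the step you yourself flag as ``the real work'' is precisely where the argument, as sketched, has a genuine gap.

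You induct on the \emph{ambient dimension} $n$: in the narrow case you project the confined slabs onto a lower‑dimensional affine subspace $W\cong\R^w$ and invoke the statement in $\R^w$. Two things break here. First, the confining subspace $W$ is \emph{local} --- it depends on the $\rho$‑ball --- so there is no single $W$ to project onto globally; the dichotomy is run per ball $B_\delta$. Second, and more seriously, after projecting (or restricting) a family $\cV_\delta$ of $k$‑planes into $W$, the Frostman/non‑concentration hypothesis does \emph{not} automatically pass to the projected family $\cV'\subset A(k,w)$: many planes may collapse to essentially the same plane in $W$. You acknowledge this (``apportion the count $\dots$ among the slabs $\dots$ verify that the effective exponent $\dots$ stays at least $\min\{1,\beta\}$'') and defer to ``Zahl's idea,'' but no mechanism is supplied. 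That mechanism is exactly what distinguishes the paper's argument: it does \emph{not} drop dimension. Instead it inducts on the \emph{scale}. The narrow contribution is organized into direction‑caps $Q_{K^{-1}}\subset G(k,n)$ and, within each cap, into $K^{-1}$‑slabs $\tau\subset\R^n$; a parabolic rescaling $\Phi_\tau$ sends $\tau$ back to $B^n(0,1)$, turning $\delta$‑planks into $\delta K$‑planks \emph{still in $\R^n$}, and the Frostman hypothesis at the coarser scale $\delta K$ is verified directly for $\Phi_\tau(\cV)$, with the constant loss absorbed by a combinatorial partitioning lemma (Lemma~\ref{goodlem}). The aggregation of the many slabs $\tau$ is done automatically because the quantity being induced on is an $L^p$ Kakeya‑maximal norm $\|\sum_V 1_{V_\delta}\|_{L^p}$ with $p>1$ close to $1$ (Theorem~\ref{kakeyathm}), not a union‑of‑slabs volume: for $p$ near $1$ the $\ell^p$‑sum over slabs and caps costs only the factor $K^{k(d-k)(p-1)}$ that closes the induction.

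A secondary point: your claim that in the broad case Brascamp--Lieb ``forces $|\bigcup_V T_V|\gtrsim\delta^\varepsilon$'' is too quick. The broad estimate that is actually available is a bound on the Brascamp--Lieb constant for a tuple of $(n-k)$‑planes $W_k,\dots,W_{d+1}$ that are transverse only in the weak sense that no single $d$‑plane $\Pi$ contains all $W_j^\perp$; one must verify the inequality $\BL\le d+\beta-(d-k+\beta)p$ by an explicit case analysis on $\dim L$ (Lemma~\ref{easylem} and the three cases following~\eqref{boundBL}), and this computation is what forces the particular exponent $p=\min\{\frac{d-k+\beta+1}{d-k+\beta},\ \frac{d+\beta}{d+\beta-(d-k+2)^{-1}}\}$ in Theorem~\ref{kakeyathm}. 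Without that computation the broad case does not automatically ``exceed the target,'' and indeed the value of $p$ one can take is dictated by it.

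So: the proposal has the right ingredients and correctly identifies where the difficulty lies, but it leaves that step open and chooses an induction (on $n$) that does not obviously interact well with the Frostman hypothesis. The paper's resolution --- induct on scale inside a fixed $\R^n$, and upgrade the target from a volume bound to an $L^p$ maximal inequality --- is precisely the ``idea of Zahl'' you were invoking without specifying, and it is what allows the narrow case to close.
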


There are many works related to Conjecture \ref{conj1}. See for example \cite{hera2018hausdorff}, \cite{hera2019hausdorff}, \cite{oberlin2011exceptional}, \cite{oberlin2016unions} and \cite{zahl2022unions}. We refer to \cite{zahl2022unions} for a detailed introduction on this problem.

\subsection{Sharp examples}

We first give some examples to show \eqref{mainbound} is sharp, i.e., we will find $\cV\subset A(k,n)$ with $\dim(\cV)=(k+1)(d-k)+\beta$, so that
\[ \dim(\bigcup_{V\in\cV}V)\le d+\min\{1,\beta\}. \]
Our examples $V\in\cV$ will lie in $\R^{d+1}(\subset \R^n)$.

\begin{itemize}
    \item When $\beta\in[1,k+1]$, let $\cV$ be a $((k+1)(d-k)+\beta)$-dimensional subset of $A(k,d+1)$. This is allowable since $\dim(A(k,d+1))=(k+1)(d+1-k)\ge (k+1)(d-k)+\beta$. Since every $V\in \cV$ is contained in $\R^{d+1}$, we have $\dim(\bigcup_{V\in\cV}V)\le \dim(\R^{d+1})=d+1$.

    \item When $\beta\in [0,1]$, we choose $I\subset \R$ to be a $\beta$-dimensional set. Let  
    \[ \cV=\cup_{a\in I} A(k,\R^d+(0,\dots,0,a)). \]
    Here, $A(k,\R^d+(0,\dots,0,a))$ is the set of $k$-planes that are contained in the space $\R^d+(0,\dots,0,a)$. We see that $\dim(\cV)=\dim I+\dim A(k,d)=(k+1)(d-k)+\beta$, and $\dim(\bigcup_{V\in\cV}V)=\dim(\R^d\times I)=d+\beta$.
\end{itemize}

\medskip

\subsection{Notation}

We introduce some notation that is needed for the later discussions. 

Let $G(k,n)$ be the set of $k$-dimensional subspaces in $\R^n$. For every $k$-plane $V$, we can uniquely write it as
\[ V=\dir(V)+x_V, \]
where $\dir(V)\in G(k,n)$ and $x_V\in V^\perp$. $\dir(V)$ refers to the direction of $V$, as can be seen that $\dir(V)=\dir(V')\Leftrightarrow V\parallel V'$.

In this paper, we use $A(k,n)$ to denote the set of $k$-planes $V$ such that $x_V\in B^n(0,1/2)$. ($B^n(0,1/2)$ is the ball of radius $1/2$ centered at the origin in $\R^n$.)
\[ A(k,n)=\{V: V\tx{~is~a~}k\tx{~dimensional~plane~}, x_V\in B^n(0,1/2)\}. \]
Usually $A(k,n)$ denotes all the $k$-planes in other references, but for our purpose we only care about those $V$ that lie near the origin.

Next, we discuss the metrics on $G(k,n)$ and $A(k,n)$.
For $V_1, V_2\in G(k,n)$, we define
\[ d(V_1,V_2)=\|\pi_{V_1}-\pi_{V_2}\|. \]
Here, $\pi_{V_1}:\R^n\rightarrow V_1$ is the orthogonal projection. We have another characterization for this metric. Define $\rho(V_1,V_2)$ to be the smallest number $\rho$ such that $B^n(0,1)\cap V_1\subset N_{\rho}(V_2)$. We have the comparability of $d(\cdot,\cdot)$ and $\rho(\cdot,\cdot)$.

\begin{lemma}
    There exists a constant $C>0$ (depending on $k,n$) such that
    \[ \rho(V_1,V_2)\le  d(V_1,V_2)\le  C\rho(V_1,V_2).  \]
\end{lemma}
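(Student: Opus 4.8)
The plan is to prove the two estimates of the lemma separately. The left inequality $\rho(V_1,V_2)\le d(V_1,V_2)$ is essentially immediate: any $v\in B^n(0,1)\cap V_1$ satisfies $v=\pi_{V_1}v$, so $\dist(v,V_2)=|v-\pi_{V_2}v|=|(\pi_{V_1}-\pi_{V_2})v|\le\|\pi_{V_1}-\pi_{V_2}\|$; hence $B^n(0,1)\cap V_1\subset N_{d(V_1,V_2)}(V_2)$, and minimality of $\rho$ gives the claim. The right inequality $d(V_1,V_2)\le C\rho(V_1,V_2)$ needs a short linear-algebra argument that genuinely uses $\dim V_1=\dim V_2=k$.

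Write $\rho:=\rho(V_1,V_2)$. First I would record two elementary facts. Since $\pi_{V_1}$ maps $B^n(0,1)$ onto $\{v\in V_1:|v|\le 1\}$, we have $\|\pi_{V_2^\perp}\pi_{V_1}\|=\sup_{v\in V_1,\,|v|\le1}\dist(v,V_2)=\rho(V_1,V_2)$, and likewise $\|\pi_{V_1^\perp}\pi_{V_2}\|=\rho(V_2,V_1)$; moreover $\|\pi_{V_1}\pi_{V_2^\perp}\|=\|\pi_{V_2^\perp}\pi_{V_1}\|$ because these operators are adjoints of one another (orthogonal projections being self-adjoint). Feeding this into the identity $\pi_{V_1}-\pi_{V_2}=\pi_{V_1}\pi_{V_2^\perp}-\pi_{V_1^\perp}\pi_{V_2}$ (verify by substituting $\pi_{V^\perp}=\mathrm{Id}-\pi_V$) gives $d(V_1,V_2)\le\rho(V_1,V_2)+\rho(V_2,V_1)$.

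It then remains to bound $\rho(V_2,V_1)$ by $\rho(V_1,V_2)$. Assume $\rho<1$. For each unit $v\in V_1$, $|\pi_{V_2}v|^2=1-|\pi_{V_2^\perp}v|^2\ge 1-\rho^2>0$, so $\pi_{V_2}|_{V_1}\colon V_1\to V_2$ is injective; since $\dim V_1=\dim V_2=k$ it is a bijection with $\|(\pi_{V_2}|_{V_1})^{-1}\|\le(1-\rho^2)^{-1/2}$. For a unit $w\in V_2$, write $w=\pi_{V_2}v$ with $v\in V_1$, $|v|\le(1-\rho^2)^{-1/2}$; then $\dist(w,V_1)\le|w-v|=|\pi_{V_2^\perp}v|\le\rho\,(1-\rho^2)^{-1/2}$, so $\rho(V_2,V_1)\le\rho\,(1-\rho^2)^{-1/2}$, and $d(V_1,V_2)\le\rho\bigl(1+(1-\rho^2)^{-1/2}\bigr)$ whenever $\rho<1$. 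Finally I would split into cases: if $\rho\le\tfrac12$ this already gives $d(V_1,V_2)\le 3\rho$, while if $\rho>\tfrac12$ the trivial bound $d(V_1,V_2)\le\|\pi_{V_1}\|+\|\pi_{V_2}\|=2<4\rho$ suffices; so $C=4$ works. (The sharp constant $C=1$ instead follows from the principal-angle/CS decomposition, but is not needed.)

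The \textbf{main obstacle} is precisely the comparison $\rho(V_2,V_1)\lesssim\rho(V_1,V_2)$: the function $\rho(\cdot,\cdot)$ is not symmetric, and the analogous bound is \emph{false} for subspaces of unequal dimension, so one must genuinely exploit that $\pi_{V_2}|_{V_1}$ is invertible. The rest is bookkeeping.
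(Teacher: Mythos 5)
Your proof is correct, and in fact considerably more complete than what the paper actually writes. The paper's proof of the hard direction $d(V_1,V_2)\lesssim\rho(V_1,V_2)$ is a single sentence asserting, without justification, that $B^n(0,1)\cap V_1\subset N_\rho(V_2)$ forces $|\pi_{V_1}(v)-\pi_{V_2}(v)|\lesssim\rho|v|$ for \emph{all} $v\in\R^n$; it never explains why. You correctly identify this as the real content of the lemma and supply an honest argument: the operator identity $\pi_{V_1}-\pi_{V_2}=\pi_{V_1}\pi_{V_2^\perp}-\pi_{V_1^\perp}\pi_{V_2}$, together with the observations $\|\pi_{V_2^\perp}\pi_{V_1}\|=\rho(V_1,V_2)$ (and its adjoint twin), reduces matters to $d\le\rho(V_1,V_2)+\rho(V_2,V_1)$. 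You then close the loop with the one step that genuinely uses $\dim V_1=\dim V_2$: invertibility of $\pi_{V_2}|_{V_1}$ with inverse norm $\le(1-\rho^2)^{-1/2}$, giving $\rho(V_2,V_1)\le\rho(1-\rho^2)^{-1/2}$, and the trivial bound $d\le 2$ handles $\rho>\tfrac12$. All the individual computations check out (including $|w-v|=|\pi_{V_2^\perp}v|$ for $w=\pi_{V_2}v$, and the homogeneity $\dist(v,V_2)\le\rho|v|$ for $v\in V_1$). Your remark that the asymmetric quantity $\rho$ cannot control $d$ when the dimensions differ is exactly the right diagnostic for why this step is not ``immediate.'' In short: same conclusion and the same first half as the paper, but you prove the half the paper only states; the paper offers no alternative route to compare against.
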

\begin{proof}
    Suppose $B^n(0,1)\cap V_1\subset N_\rho(V_2)$, then for any $v\in\R^n$, we have
    \[ |\pi_{V_1}(v)-\pi_{V_2}(v)|\lesssim \rho|v|, \]
    which implies $d(V_1,V_2)\lesssim \rho$. On the other hand, if for any $|v|\le 1$ we have
    \[ |\pi_{V_1}(v)-\pi_{V_2}(v)|\le d|v|, \]
    then we obtain that $\pi_{V_1}(v)\subset N_{d}(V_2)$. Letting $v$ ranging over $B^n(0,1)\cap V_1$, we get $B^n(0,1)\cap V_1\subset N_{d}(V_2)$, which means $\rho(V_1,V_2)\le  d$. 
\end{proof}

We can also define the metric on $A(k,n)$ given by
\begin{equation}\label{defdist}
    d(V,V')=d(\dir(V),\dir(V'))+|x_{V}-x_{V'}|. 
\end{equation} 
Here, we still use $d$ to denote the metric on $A(k,n)$ and it will not make any confusion.

Similarly, for $V,V'\in A(k,n)$ we can define $\rho(V,V')$ to be the smallest number $\rho$ such that $B^n(0,1)\cap V\subset N_\rho(V')$. We also have the following lemma. We left the proof to the interested readers. 

\begin{lemma}\label{comparablelem}
    There exists a constant $C>0$ (depending on $k,n$) such that for $V,V'\in A(k,n)$,
    \[ C^{-1} d(V,V')\le \rho(V,V')\le C d(V,V').  \]
\end{lemma}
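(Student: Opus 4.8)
The plan is to establish Lemma~\ref{comparablelem} by reducing the distance comparison on $A(k,n)$ to the already-proved comparison on $G(k,n)$, handling the "direction part'' and the "translation part'' separately. The metric $d(V,V')$ is defined in \eqref{defdist} as the sum $d(\dir(V),\dir(V'))+|x_V-x_{V'}|$, so the content of the lemma is that this sum is comparable to the single geometric quantity $\rho(V,V')$, the smallest $\rho$ with $B^n(0,1)\cap V\subset N_\rho(V')$. I would prove the two inequalities in opposite directions and keep track of the roles of $\dir(V)$ and $x_V$ throughout.

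**First I would prove** $\rho(V,V')\lesssim d(V,V')$. Take a point $v\in B^n(0,1)\cap V$, so $v=u+x_V$ with $u\in\dir(V)$, $|u|\le 1+|x_V|\le 3/2$ (using $x_V\in B^n(0,1/2)$). Since $\dir(V)\subset N_{d(\dir(V),\dir(V'))}(\dir(V'))$ on the unit ball by the previous lemma (after rescaling by the factor $3/2$), there is $u'\in\dir(V')$ with $|u-u'|\lesssim d(\dir(V),\dir(V'))$. Then $u'+x_{V'}\in V'$ and
\[ |v-(u'+x_{V'})|\le |u-u'|+|x_V-x_{V'}|\lesssim d(\dir(V),\dir(V'))+|x_V-x_{V'}|=d(V,V'). \]
Taking the supremum over such $v$ gives $\rho(V,V')\lesssim d(V,V')$.

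**The reverse inequality** $d(V,V')\lesssim \rho(V,V')$ is where I expect the only real subtlety. Assume $\rho:=\rho(V,V')$ is small (for $\rho$ bounded below the inequality is trivial since $d$ is bounded on $A(k,n)$). The hypothesis says every unit-ball point of $V$ is within $\rho$ of $V'$. To extract a bound on $|x_V-x_{V'}|$: the point $x_V\in V$ has norm $\le 1/2<1$, so there is $w\in V'$ with $|x_V-w|\le\rho$; writing $w=u'+x_{V'}$ with $u'\in\dir(V')$ and $x_{V'}\perp\dir(V')$, and projecting onto $\dir(V')^\perp$, we get $|\pi_{\dir(V')^\perp}(x_V)-x_{V'}|\le\rho$. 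Meanwhile $x_V$ itself is small, so $\pi_{\dir(V')^\perp}(x_V)$ differs from $x_V$ by at most $\|\pi_{\dir(V')}-\pi_{\dir(V)}\| \cdot |x_V|\lesssim d(\dir(V),\dir(V'))$ once we know the direction part is controlled. For the direction part, take $u\in \dir(V)\cap B^n(0,1)$; then $u+x_V\in V$ (and has norm $\le 3/2$, so up to a harmless constant it is comparable to a unit-ball point), hence lies within $O(\rho)$ of some point of $V'$, i.e. within $O(\rho)$ of $\dir(V')+x_{V'}$; subtracting the comparable statement for the point $x_V\in V$ cancels the translation and shows $u$ lies within $O(\rho)$ of $\dir(V')$, giving $\rho(\dir(V),\dir(V'))\lesssim\rho$ and hence $d(\dir(V),\dir(V'))\lesssim\rho$ by the earlier lemma. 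Feeding this back into the estimate for the translation part closes the argument: $|x_V-x_{V'}|\lesssim\rho$ as well, so $d(V,V')\lesssim\rho(V,V')$.

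**The main obstacle**, such as it is, is the bookkeeping in the reverse direction: one must disentangle the direction discrepancy from the translation discrepancy, and the natural way to do this is to first control the directions (by differencing two "point-of-$V$ is near $V'$'' statements so the translations cancel) and only then control the translations. One should also be slightly careful that points like $x_V$ and $u+x_V$ have norm up to $3/2$ rather than $1$, but since $x_V\in B^n(0,1/2)$ this only costs an absolute constant and is absorbed into the constant $C$. All estimates are uniform for $V,V'\in A(k,n)$ because $x_V,x_{V'}$ are uniformly bounded, which is exactly why the definition of $A(k,n)$ restricts to $x_V\in B^n(0,1/2)$.
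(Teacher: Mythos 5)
Your proof is correct. The paper itself leaves this lemma ``to the interested readers,'' so there is no proof in the paper to compare against; your route---splitting $d$ into the direction part and the translation part, proving $\rho\lesssim d$ directly, and in the reverse direction first decoupling the two parts by differencing two ``point-of-$V$-near-$V'$'' statements to cancel the translations---is a clean and natural way to do it. The one place you gloss over is the assertion that a point of $V$ of norm up to $3/2$ still lies within $O(\rho)$ of $V'$, given only that $B^n(0,1)\cap V\subset N_\rho(V')$. In the linear case (the first lemma) this is immediate from homogeneity, but for affine planes it is not; it does follow by taking two points $a,b\in B^n(0,1)\cap V$ on the relevant line, approximating each by $a',b'\in V'$, and then using the affine extension $a'+t(b'-a')\in V'$ to approximate the point of norm $\le 3/2$ with error $O(\rho)$. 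Since $x_V,x_{V'}\in B^n(0,1/2)$, this only costs a bounded constant, exactly as you anticipate. With that step made explicit the argument is complete.
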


\begin{definition}
    For $V\in A(k,n)$ and $0<r<1$, we define 
    \[V_r:=N_r(V)\cap B^n(0,1).  \]
\end{definition}
We see that $V_r$ is morally a slab of dimensions $\underbrace{r\times \dots\times r}_{k \textup{~times}}\times \underbrace{1\times \dots\times 1}_{n-k \tx{~times}}$. We usually call $V_r$ a $k$-dimensional $r$-slab. When $k$ is already clear, we simply call $V_r$ an $r$-slab.
If $W$ is a convex set such that $C^{-1} W\subset V_r\subset C W$, then we also call $W$ an $r$-slab. Here, the constant $C$ will be a fixed large constant.

\begin{definition}
    For two $r$-slab $V_r$ and $V'_r$. We say they are comparable if $C^{-1}V_r\subset V'_r\subset C V_r$. We say they are essentially distinct if they are not comparable.
\end{definition}

In this paper, we will also consider the balls and $\de$-neighborhood on $A(k,n)$. Recall that we use $B_r(x)$ to denote the ball in $\R^n$ of radius $r$ centered at $x$. To distinguish the ambient space, we will use letter $Q$ to denote the balls in $A(k,n)$. For $V\in A(k,n)$, we use $Q_r(V)$ to denote the ball in $A(k,n)$ of radius $r$ centered at $V$. More precisely,
\[ Q_r(V_0):=\{V\in A(k,n): d(V,V_0)\le r  \}. \]
For a subset $X\subset A(k,n)$, we use the fancy letter $\cN$ to denote the neighborhood in $A(k,n)$:
\[ \cN_r(X):=\{V\in A(k,n): d(V,X)\le r\}. \]
Here, $d(V,X)=\inf_{V'\in X}d(V,V')$.

\bigskip

Next, we briefly recall how to define the Hausdorff dimension for subsets of a metric space. Let $(M,d)$ be a metric space. For $X\subset M$, we denote the $s$-dimensional Hausdorff measure of $X$ under the metric $d$ to be $\cH^s(X;d)$. 
We see that if $d'$ is another metric on $M$ such that $d(\cdot,\cdot)\sim d'(\cdot,\cdot)$, then $\cH^s(X;d)\sim \cH^s(X;d')$. It make sense to define the Hausdorff dimension of $X$ which is independent of the choice of comparable metrics:
\[ \dim X:= \sup\{s: \cH^s(X;d)>0\}. \]

Throughout the paper we will work with the metric space $M=A(k,n)$ and the metric $d$ defined as in \eqref{defdist}. For any $X\subset A(k,n)$, we will simply denote the $s$-dimensional Hausdorff measure of $X$ under the metric $d$ to be $\cH^s(X)$. Similarly, we use $\cH^s_\infty(X)$ to denote the Hausdorff content.

\subsection{Strategy of proof}
Our proof will follow the idea of Zahl \cite{zahl2022unions}. The main goal is to prove a Kakeya maximal estimate for $k$-planes, and then as a corollary we can prove Conjecture \ref{conj1}.  To handle with the $k$-planes rather than lines, we need to use the Brascamp-Lieb inequality, which can be viewed as a generalized version of the $k$-linear Kakeya estimate. We state our Kakeya maximal estimate for $k$-planes. This can be viewed as a counterpart of Theorem 1.2 in \cite{zahl2022unions}.

\begin{theorem}\label{kakeyathm}

Let $0<k\le d< n$ be integers and $\beta\in[0,1]$. Let $\cV=\{V\}$ be a subset of $A(k,n)$. Suppose that for all balls $Q_r\subset A(k,n)$ of radius $r\in [\de,1]$, we have
\begin{equation}\label{condition}
    \#\{ V\in\cV: V\in Q_r \}\le (r/\de)^{(k+1)(d-k)+\beta}. 
\end{equation} 
Then
\begin{equation}\label{kakeyaineq}
    \| \sum_{V\in\cV} 1_{V_\de}\|_{L^p(B^n(0,1))}\le C_\e \de^{-\frac{k(d-k)}{p'}-\e}\big(\sum_{V\in\cV}|V_\de|\big)^{1/p}.
\end{equation}
Here, 
\[p=\min\left\{ \frac{d-k+\beta+1}{d-k+\beta}, \frac{d+\beta}{d+\beta-(d-k+2)^{-1}},d-k+2\right\}\]
is a number strictly bigger than $1$.
\end{theorem}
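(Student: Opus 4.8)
The plan is to prove Theorem \ref{kakeyathm} by following the broad-narrow / induction-on-scales strategy of Zahl \cite{zahl2022unions}, with the multilinear Kakeya input replaced by the Brascamp--Lieb inequality. The first step is to set up a bootstrapping scheme: let $\mathcal{A}(\delta)$ be the best constant in \eqref{kakeyaineq} over all families $\cV$ satisfying the non-concentration hypothesis \eqref{condition} at scale $\delta$, and aim to show $\mathcal{A}(\delta) \lesssim_\e \delta^{-k(d-k)/p' - \e}$. We dyadically pigeonhole the slabs $V_\delta$ according to the multiplicity $\mu$ of the sum $\sum_V 1_{V_\delta}$ on the relevant part of $B^n(0,1)$, so it suffices to bound the measure of the set where $\sum_V 1_{V_\delta} \sim \mu$.

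**Next**, at a coarse scale $\rho$ with $\delta \le \rho \le 1$, I would tile $B^n(0,1)$ by $\rho$-balls and, inside each, run a broad-narrow dichotomy on the \emph{directions} $\dir(V) \in G(k,n)$: either the directions contributing at a point are concentrated near a proper subvariety of the Grassmannian (the narrow case), which is handled by projecting to a lower-dimensional configuration and invoking the induction hypothesis on $d$ (and on $\rho/\delta$ many scales), or the directions are ``spread out'' in the sense required to run a transversality/Brascamp--Lieb estimate. In the broad case, the key geometric point is that a quantitative non-degeneracy of $m$ subspaces $V^{(1)},\dots,V^{(m)} \in G(k,n)$ (for a suitable $m$ determined by $d$, $k$) gives a Brascamp--Lieb datum with finite, stable constant; applying the Brascamp--Lieb inequality on each $\rho$-ball to the product $\prod_j (\sum_{V \in \cV_j} 1_{V_\delta})$ and then rescaling the $\rho$-ball to unit scale produces, after Hölder, the gain that drives the induction. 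The non-concentration hypothesis \eqref{condition} at all intermediate scales $r \in [\delta,1]$ is exactly what is needed to control how many slabs survive in the narrow sub-cases and to bound the number of essentially distinct $\rho$-planks.

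**The exponent** $p = \min\{(d-k+\beta+1)/(d-k+\beta),\ (d+\beta)/(d+\beta-(d-k+2)^{-1})\}$ should emerge as the balance point between two competing constraints: the first term is the ``trivial'' $L^1$-to-$L^\infty$ interpolation exponent coming from the overlap of $\delta$-slabs through a typical point (governed by the $(k+1)(d-k)+\beta$ dimension count restricted to directions), and the second, genuinely smaller, term is what the Brascamp--Lieb/broad estimate yields — the appearance of $(d-k+2)^{-1}$ reflecting the number $m = d-k+2$ of planes needed to pin down a $d$-dimensional configuration transversally. I would verify the claimed value of $p$ by optimizing the two-scale inequality $\mathcal{A}(\delta) \lesssim (\rho/\delta)^{\kappa} \mathcal{A}(\rho/\delta) \cdot (\text{BL factor})$ and checking the fixed point, and separately confirm $p>1$ by a short computation using $\beta \le 1 < k+1$ and $k \le d$.

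**The main obstacle** I anticipate is the geometric heart of the broad case: establishing that a suitably ``spread'' collection of $k$-subspaces in $\R^n$ furnishes a Brascamp--Lieb datum whose constant is bounded uniformly (independent of $\delta$ and of small perturbations of the subspaces), and quantifying ``spread'' in a way that is (i) strong enough to invoke Bennett--Carbery--Christ--Tao stability of the Brascamp--Lieb constant, yet (ii) weak enough that its failure genuinely forces the narrow, lower-dimensional scenario. Making the pigeonholing of directions into finitely many ``transverse'' classes versus ``degenerate'' classes precise — likely via a compactness argument on $G(k,n)^m$ or an explicit factorization through the relevant flag variety — and threading the non-concentration condition \eqref{condition} correctly through both the rescaling and the induction on $d$, is where the real work lies; the rest is bookkeeping of scales and Hölder exponents.
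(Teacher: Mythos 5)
Your overall framework---broad-narrow dichotomy on directions, Brascamp--Lieb in the broad case, induction on scale in the narrow case---matches the paper's. But your plan diverges from the actual proof at the point that carries the real weight, and in a way that introduces genuine obstacles. You propose handling the narrow case by ``projecting to a lower-dimensional configuration and invoking the induction hypothesis on $d$.'' The paper does \emph{not} induct on $d$. Instead, when the directions concentrate near $\cN_{CK^{-1}}(G(k,\Pi))$ for some $\Pi\in G(d,n)$, the paper (a) uses that $G(k,\Pi)$ is a $k(d-k)$-dimensional submanifold of $G(k,n)$ to bound the number of participating direction cells $Q_{K^{-1}}$ by $K^{k(d-k)}$, giving the H\"older factor $K^{k(d-k)(p-1)}$; and (b) rescales each $K^{-1}$-plank $\tau$ to a unit ball by the anisotropic map $\Phi_\tau$ and applies the \emph{same} theorem (same $k,d,n,\beta$) at the coarser scale $\delta K$. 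No projection and no change of $d$ is involved. If you instead try to decrease $d$, you must answer what happens at $d=k$ (no room left), and you must re-derive the non-concentration exponent $(k+1)(d'-k)+\beta'$ and the target $p'$-exponent, all of which depend on $d$; and actually projecting $k$-planes in $\R^n$ onto a $d$-plane $\Pi$ can collapse distinct planes and wreck the spacing hypothesis \eqref{condition} --- these are not minor bookkeeping issues.

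The second substantive gap is that you identify the broad case as the hard part but only name a tool (BCCT stability) rather than the computation that actually fixes the numerology. The paper uses Maldague's \emph{regularized} Brascamp--Lieb inequality (which applies directly to $\sum 1_{V_\delta}$, no extra discretization needed) plus compactness to get a uniform constant, and then the crux is the bound $\BL(\{W_j\},p)\le d+\beta-(d-k+\beta)p$, verified by a case analysis on $\dim L$ via Lemma \ref{easylem}. The two branches of $p=\min\{\frac{d-k+\beta+1}{d-k+\beta},\ \frac{d+\beta}{d+\beta-(d-k+2)^{-1}}\}$ drop out exactly from the cases $l\le k-1$ and $k\le l\le d$ of that analysis; without it, the exponent in the theorem is not pinned down. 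Finally, you omit the technical but necessary Lemma \ref{goodlem}: after rescaling, the spacing condition \eqref{condition} only holds up to a constant, and one must split the rescaled family into $O(1)$ subfamilies to restore the constant $1$ before applying the induction hypothesis --- otherwise the inductive constant drifts.
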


\begin{remark}
    {\rm
For our range of $p$, the factor $\de^{-\frac{k(d-k)}{p'}}$ (up to the $\e$ loss) on the right hand side of \eqref{kakeyaineq} is sharp. Otherwise, it would give a better lower bound in \eqref{mainbound} which is impossible. The author is able to prove a sharp estimate for $p$ close to $1$. An interesting question would be to find the sharp factor $\de^{-C_p}$ for all $p\ge 1$.
    }
\end{remark}

The main idea in the proof of Theorem \ref{kakeyathm} is the Broad-Narrow method, which dates back to \cite{bourgain2010bounds}. Let us look at the left hand side of \eqref{kakeyaineq}. We first partition the integration domain $B^n(0,1)$ into $\de$-balls: $B^n(0,1)=\bigcup B_\de$. For each such $\de$-ball $B_\de$, consider all the $V_\de$ that intersect $B_\de$. We can morally view these $V_\de$ as a subset of $G(k,n)$. Here is the dichotomy: If there are not too ``many" $V_\de$ intersecting $B_\de$, then we say we are in the narrow case and it can be dealt with by the induction. If there are ``many" $V_\de$ intersecting $B_\de$, then most of these $V_\de$ are transverse to each other and we say we are in the broad case. The broad case is dealt with by the Brascamp-Lieb inequality. We will quantify the ``many" in the next section. 

In Section \ref{sec2}, we prove theorem \ref{kakeyathm}. In Section \ref{sec3}, we prove that Theorem \ref{kakeyathm} implies Conjecture \ref{conj1}.

\bigskip

 \noindent
 {\bf  Acknowledgement.}
 The author would like to thank Josh Zahl for helpful discussions.

\section{Proof of Theorem \ref{kakeyathm}}\label{sec2}

In this section we will use the Broad-Narrow method developed by Bourgain and Guth \cite{bourgain2010bounds}. To estimate the left hand side of \eqref{kakeyaineq}, we will decompose the integral into broad part and narrow part. The broad part will be handled by the Brascamp-Lied inequality, and the narrow part will be handled by the induction. 

\subsection{Brascamp-Lieb inequality}
The version of Brascamp-Lieb inequality we are going to use is due to Maldague. See Theorem 2 in \cite{maldague2022regularized}.

\begin{theorem}[Maldague]\label{Domthm}
Consider in $\R^n$.
    Let $W_j\subset G(n-k,n)$ for $j=1,\dots,J$. Fix $p\in[1,J]$. Define
    \begin{equation}\label{bl0}
        \BL(\{W_j\}_{j=1}^J,p):=\sup_{L\leq \R^n }(\dim L-\frac{p}{J}\sum_{j=1}^J\dim\pi_{W_j}(L)). 
    \end{equation} 
Here, $L\le \R^n$ means $L$ is a subspace of $\R^n$, and $\pi_{W_j}:\R^n\rightarrow W_j$ is the orthogonal projection. 

There exists $\nu>0$ depending on $\{W_j\}_{j=1}^J$, so that the following is true. For any $\cV_{j}=\{V_{j}\}\subset A(k,n)\ (j=1,\dots,J)$ being sets of $k$-planes, such that each $V_{j}\in \cV_{j}$ has its $k$ longest directions orthogonal to some $W\in Q_\nu(W_j)$, we have
\begin{equation}
    \int_{[-1,1]^n} \prod_{j=1}^J\bigg(\sum_{V_{j}\in\cV_{j}}1_{V_{j,\de}} \bigg)^{\frac{p}{J}}\lesssim \de^{n-\e}\de^{-\BL(\{W_j\}_{j=1}^J,p)}\prod_{j=1}^J\bigg(\#\cV_{j}\bigg)^{\frac{p}{J}}.
\end{equation}
\end{theorem}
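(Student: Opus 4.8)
Following Maldague, this estimate is a $\de$-discretized Brascamp--Lieb inequality carrying the ``scaling deficiency'' exponent $\BL(\{W_j\}_{j=1}^J,p)$ together with an arbitrarily small power loss $\de^{-\e}$, and it is stable under $\nu$-perturbations of the plank directions; I would organize the argument in three stages. \emph{Stage 1 (reduction to Brascamp--Lieb form).} For $V\in A(k,n)$ write $W(V):=\dir(V)^\perp\in G(n-k,n)$, so that $x_V\in W(V)$. Decomposing $x\in\R^n$ along $\dir(V)\oplus W(V)$ shows $\dist(x,V)=|\pi_{W(V)}(x)-x_V|$, hence on $B^n(0,1)$ the function $1_{V_\de}$ coincides with the $\pi_{W(V)}$-pullback of the indicator of a $\de$-ball in $W(V)\cong\R^{n-k}$. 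So the left-hand side of the claimed inequality is an honest Brascamp--Lieb-type integral for the datum $(\{\pi_{W_j}\}_j,\,p/J)$: the $j$-th factor is a superposition of such pullbacks under projections $\pi_{W(V_j)}$ with $W(V_j)\in Q_\nu(W_j)$, and the corresponding function on $W_j$ has integral $\sim(\#\cV_j)\,\de^{n-k}$. A dyadic pigeonholing lets us assume the cardinalities $\#\cV_j$ are all comparable.

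\emph{Stage 2 (the core discretized inequality, exact directions).} First treat $\nu=0$: each $\cV_j$ consists of $\de$-planks with direction exactly $W_j^{\perp}$, so the $j$-th factor is $g_j\circ\pi_{W_j}$ with $\int_{W_j}g_j\sim(\#\cV_j)\,\de^{n-k}$, and the task is the stated bound for these functions. The key structural point is that $(\{\pi_{W_j}\}_j,\,p/J)$ is in general \emph{not} a scaling-balanced Brascamp--Lieb datum (that would force $n=p(n-k)$) and it may also violate the dimension condition, so the Bennett--Carbery--Christ--Tao theorem does not apply directly; indeed the continuous Brascamp--Lieb constant is then $+\infty$. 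What survives at resolution $\de$ is the inequality with the loss $\de^{-\BL}$, and $\BL(\{W_j\}_j,p)=\sup_{L\le\R^n}\big(\dim L-\tfrac pJ\sum_j\dim\pi_{W_j}(L)\big)$ measures exactly this failure ($\BL=0$ precisely when the datum has a finite continuous constant). I would establish this by an induction on scales of Bennett--Carbery--Tao type: tile $[-1,1]^n$ by cubes of a fixed small side, rescale each to the unit cube (turning $\de$-planks into coarser planks), apply the inductive hypothesis at the coarser scale and sum; over $\sim\log(1/\de)$ iterations the residual scaling mismatch compounds to $\de^{-\BL}$, with the $\log$ overhead absorbed into $\de^{-\e}$. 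That the exponent is \emph{exactly} $\BL$, and not larger, is the structural heart: the lower bound is the sharp example (planks through the origin concentrated on a sub-plank along an extremal subspace $L^\ast$ attaining the supremum), while the matching upper bound uses the factorization of Brascamp--Lieb data through critical subspaces, as in Bennett--Carbery--Christ--Tao, to split inductively on $n$ into lower-dimensional balanced pieces together with the defect carried by $L^\ast$.

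\emph{Stage 3 (uniformity in $\nu$).} It remains to pass from $\nu=0$ to small $\nu>0$. By local boundedness (indeed continuity) of the Brascamp--Lieb constant in the underlying maps, due to Bennett--Bez--Flock--Lee, one may fix $\nu=\nu(\{W_j\}_j)$ so small that $\BL(\{W_j'\}_j,p)\le\BL(\{W_j\}_j,p)+\tfrac12$ whenever $d(W_j',W_j)\le 2\nu$ for all $j$. A covering argument then splits each $\cV_j$ into $O_\nu(1)$ subfamilies, each with directions confined to an arbitrarily small cap about a fixed $W_j^{(i)}\in Q_\nu(W_j)$; applying Stage 2 (with those fixed directions, whose $\BL$ lies within $\e/2$ of $\BL(\{W_j\}_j,p)$) to each of the $O_\nu(1)^J$ combinations and summing, the combinatorial loss is absorbed into $\de^{-\e}$ and the theorem follows. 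Alternatively one may run the Stage-2 induction with the $\nu$-thickening kept in place throughout, since a single rescaling step only resolves directions up to that step's scale.

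\emph{Main obstacle.} The crux is Stage 2: the unbalanced discretized Brascamp--Lieb inequality with the sharp deficiency exponent. One cannot simply quote a continuous Brascamp--Lieb inequality, whose constant is infinite off the scaling-balanced (and dimension-admissible) locus, so the estimate must be built at resolution $\de$ with the scaling defect tracked through the induction on scales; identifying the accumulated defect with exactly $\sup_L\big(\dim L-\tfrac pJ\sum_j\dim\pi_{W_j}(L)\big)$, rather than some larger quantity, is precisely where the critical-subspace factorization of Brascamp--Lieb data has to be brought in. Keeping the $\log(1/\de)$ overheads and the $O_\nu(1)$ combinatorial losses genuinely inside a single $\de^{-\e}$ is routine but necessary bookkeeping.
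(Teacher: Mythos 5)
This theorem is not proved in the paper at all: it is quoted verbatim as Theorem~2 of Maldague's work \cite{maldague2022regularized} and used as a black box (the only thing the paper itself adds is the compactness/upper-semicontinuity upgrade to Theorem~\ref{BLthm}). So there is no in-paper argument to compare against; the relevant comparison is with Maldague's proof. Your three-stage outline does identify the standard architecture for such regularized Brascamp--Lieb estimates --- rewriting $1_{V_\de}$ as a pullback under $\pi_{\dir(V)^\perp}$, an induction on scales whose accumulated scaling defect is controlled by $\sup_L(\dim L-\frac{p}{J}\sum_j\dim\pi_{W_j}(L))$ via factorization through critical subspaces, and a final perturbative step to allow directions in $Q_\nu(W_j)$ --- and Stage~1 is correct as stated.

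The gap is that Stage~2, which you yourself flag as ``the structural heart,'' is asserted rather than carried out, and it is precisely the content of Maldague's theorem: one must show the loss is \emph{exactly} $\de^{-\BL}$ and not worse, which requires either Maldague's regularization of the input functions at the scales dictated by the Brascamp--Lieb polytope, or an explicit decomposition $\R^n=L^\ast\oplus(L^\ast)^\perp$ along an extremizing subspace with a genuinely finite local Brascamp--Lieb inequality on the quotient. Naming ``induction on scales'' and ``critical-subspace factorization'' does not substitute for verifying that the quotient datum satisfies the local finiteness condition and that the defect on $L^\ast$ contributes only $\de^{-\BL}$; as written this is a plan, not a proof. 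A secondary inaccuracy: in Stage~3 the right tool is not the Bennett--Bez--Flock--Lee continuity of the \emph{analytic} Brascamp--Lieb constant (which is typically infinite here, off the scaling-balanced locus), but the upper semicontinuity of the combinatorial quantity $\BL(\{W_j\}_{j},p)$ in the $W_j$ --- for fixed $L$ the map $W\mapsto\dim\pi_W(L)$ is lower semicontinuous, and a compactness argument in $L$ then gives $\BL(\{W_j'\}_j,p)\le\BL(\{W_j\}_j,p)+\e$ for $W_j'\in Q_\nu(W_j)$ with $\nu$ small; this is exactly the mechanism behind the paper's Theorem~\ref{BLthm}, and your covering argument then goes through. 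Since the paper intends this result to be cited rather than reproved, the honest options are to cite Maldague directly or to supply Stage~2 in full.
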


By a compactness argument, we deduce the following result.

\begin{theorem}\label{BLthm}
Consider in $\R^n$.
    Let $\cW_j\subset G(n-k,n)$ be a compact set for $j=1,\dots,J$. Fix $p\in[1,J]$. 
Define
\begin{equation}\label{bl}
\BL(\{\cW_j\}_{j=1}^J,p):=\sup_{W_1\in\cW_1,\dots,W_J\in\cW_J }\BL(\{W_j\}_{j=1}^J,p). 
\end{equation} 
Then for any $\cV_{j}=\{V_{j}\}\subset A(k,n)\ (j=1,\dots,J)$ being sets of $k$-planes, such that each $V_{j}\in \cV_{j}$ has its $k$ longest directions orthogonal to some $W\in\cW_j$, we have
\begin{equation}\label{BL}
    \int_{[-1,1]^n} \prod_{j=1}^J\bigg(\sum_{V_{j}\in\cV_{j}}1_{V_{j,\de}} \bigg)^{\frac{p}{J}}\le C(\{\cW_j\},p,\e) \cdot \de^{n-\e}\de^{-\BL((\cW_j)_{j=1}^J,p)}\prod_{j=1}^J\bigg(\#\cV_{j}\bigg)^{\frac{p}{J}}.
\end{equation}
\end{theorem}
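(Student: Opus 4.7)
My plan is to deduce Theorem \ref{BLthm} from Theorem \ref{Domthm} by a standard compactness-and-partition argument. Write $F^* := \BL(\{\cW_j\}_{j=1}^J, p)$, the supremum over the compact set $\cW_1 \times \cdots \times \cW_J$. For each point $\vec{W}^* = (W_1^*,\dots,W_J^*)$ in this product, Theorem \ref{Domthm} supplies a radius $\nu(\vec{W}^*) > 0$ such that the pointwise BL inequality holds on $U_{\vec{W}^*} := \prod_{j=1}^J Q_{\nu(\vec{W}^*)}(W_j^*)$ with exponent $\BL(\{W_j^*\},p) \le F^*$. Since $\de<1$, the factor $\de^{-\BL(\{W_j^*\},p)}$ can be weakened to $\de^{-F^*}$, so each local estimate already has the exponent we want.

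The first step is to pass from pointwise neighborhoods to a uniform finite cover. The open sets $\{U_{\vec{W}^*}\}$ cover the compact product $\cW_1\times\cdots\times\cW_J$, so by the Lebesgue number lemma there exists $\eta>0$ such that every subset of diameter at most $\eta$ lies in some $U_{\vec{W}^*}$. Cover each $\cW_j$ by finitely many balls $\{Q_{\eta/(3J)}(W_j^{(s)})\}_{s\in S_j}$. Every product $\prod_j Q_{\eta/(3J)}(W_j^{(s_j)})$ has diameter at most $\eta$, hence lies in some $U_{\vec{W}^*_{\vec{s}}}$, and therefore inherits the BL inequality from Theorem \ref{Domthm} with exponent $F^*$.

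For the partition step, attach to each $V_j\in\cV_j$ a direction $W(V_j)\in\cW_j$ orthogonal to the $k$ longest directions of $V_j$, and split $\cV_j=\bigsqcup_{s\in S_j}\cV_j^{(s)}$ according to which ball $Q_{\eta/(3J)}(W_j^{(s)})$ contains $W(V_j)$. Setting $b_j^{(s)}:=\sum_{V_j\in\cV_j^{(s)}} 1_{V_{j,\de}}$ and using the elementary inequality $(\sum_s a_s)^{p/J}\lesssim \sum_s a_s^{p/J}$ (with constant depending on $|S_j|$ and $p/J$), expand the left side of \eqref{BL} into a finite sum of products $\prod_j (b_j^{(s_j)})^{p/J}$. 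For each tuple $\vec{s}$, the definition of $\cV_j^{(s_j)}$ places all relevant directions inside $U_{\vec{W}^*_{\vec{s}}}$, so Theorem \ref{Domthm} yields
\[
\int \prod_{j=1}^J (b_j^{(s_j)})^{p/J} \lesssim \de^{n-\e}\de^{-F^*}\prod_{j=1}^J (\#\cV_j^{(s_j)})^{p/J}.
\]
Summing over $\vec{s}$ and using $\sum_{s_j\in S_j}(\#\cV_j^{(s_j)})^{p/J}\lesssim (\#\cV_j)^{p/J}$ (by H\"older when $p/J\le 1$, by the power-mean inequality when $p/J>1$) gives \eqref{BL}.

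The main obstacle is purely organizational rather than analytic: one must ensure that the products of small balls in $G(n-k,n)^J$ used to partition the planes are small enough that their enlargements still sit inside one of the neighborhoods furnished by Maldague's theorem, whose radius is a priori a function of the center. The Lebesgue number lemma is the clean device that uniformizes this dependence, and no further input beyond Theorem \ref{Domthm} is needed.
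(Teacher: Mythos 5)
Your proposal is correct and is precisely the compactness argument the paper alludes to but does not spell out. The only point worth flagging is cosmetic: in two places you invoke ``H\"older'' and ``power mean'' for the two-sided comparison between $\bigl(\sum_s a_s\bigr)^{p/J}$ and $\sum_s a_s^{p/J}$, and you swap which tool covers which regime between the two occurrences. What is actually used is subadditivity of $t\mapsto t^\theta$ for $\theta\le 1$ and superadditivity for $\theta\ge 1$ in one direction, and in the other direction the $\ell^\theta\hookrightarrow\ell^1$ or $\ell^1\hookrightarrow\ell^\theta$ embeddings on a finite index set of size $|S_j|$, which cost only a constant depending on $|S_j|$ and $p/J$; the inequalities hold in both regimes and your constants are finite, so the argument goes through. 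The Lebesgue-number step is indeed the right device, since the radius $\nu$ furnished by Theorem \ref{Domthm} depends on the full tuple $(W_1,\dots,W_J)$ rather than on each coordinate separately, so one must uniformize over the compact product rather than over each $\cW_j$ independently; your partition into products of $\eta/(3J)$-balls (with the product metric) handles this correctly.
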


\subsection{Broad-narrow reduction}
We discuss the broad-narrow reduction in this subsection. There are some important submanifolds of $G(k,n)$ which resemble the great $(d-1)$-sphere $S^{d-1}$ in $S^{n-1}\simeq G(1,n)$. For any $d$-dimensional subspace $\Pi\subset \R^n$, let $G(k,\Pi)$ be the set of $k$-dimensional subspaces that are contained in $\Pi$. We see that $\dim G(k,\Pi)=k(d-k)$. 

Choose $\cD\subset G(k,n)$ to be a maximal $\de$-separated subset.
We first modify the directions of the $k$-planes in $\cV$ in Theorem \ref{kakeyathm} so that they have directions in $\cD$. For each $V\in\cV$, we can find another $V'$ so that $\dir(V')\in\cD$ and $V_\de\subset V'_{10\de}$.
We denote the new set of planes $\{V'\}$ obtained in this way by $\cV'$, then
\begin{equation}\label{rhs1}
    \|\sum_{V\in\cV}1_{V_\de} \|_{L^p(B^n(0,1) )}^p\le \| \sum_{V'\in\cV'}1_{V'_{10\de}} \|_{L^p(B^n(0,1) )}^p. 
\end{equation} 

We cover $B(0,1)$ by finitely overlapping $\de$-balls $\{B_\de\}$.
We bound the right hand side of \eqref{rhs1} as
\[ \| \sum_{V\in\cV'}1_{V_{10\de}} \|_{L^p(B^n(0,1) )}^p\le \sum_{B_\de} \| \sum_{V\in\cV'}1_{V_{10\de}} \|_{L^p(B_\de)}^p \lesssim\sum_{B_\de} \#\{V\in\cV': V_{10\de}\cap B_\de\neq \emptyset  \}^p \de^n. \]
Fix a $B_\de$, and let
\[ \cV'(B_\de):=\{V\in\cV': V_{10\de}\cap B_\de\neq \emptyset\}.\]
Our goal is to find an upper bound of $\#\cV'(B_\de)$.
Since the planes in $\cV'(B_\de)$ are $\de$-separated and intersect $B_{11\de}$, for any direction in $\cD$ there are $\lesssim 1$ planes in $\cV'(B_\de)$ that are in that direction. We can choose a subset $\cV(B_\de)\subset \cV'(B_\de)$, so that any two planes in $\cV(B_\de)$ have different directions and 
\[  \#\cV(B_\de)\gtrsim \#\cV'(B_\de).\]
Therefore, we bound the left hand side of \eqref{rhs1} by
\[ \lesssim\sum_{B_\de}\#\cV(B_\de)^p\de^n. \]
Since the planes in $\cV(B_\de)$ morally pass through $B_\de$ and have $\de$-separated directions, we can view $\cV(B_\de)$ as a $\de$-separated subset of $G(k,n)$.


Fix a $B_\de$, there is a dichotomy to estimate $\#\cV(B_\de)$. We talk about the heuristic. The first scenario is when there exists a $d$-dimensional subspace $\Pi\in G(d,n)$ so that most of the elements $V\in\cV(B_\de)$ lie close to $G(k,\Pi)$ (if we view $V$ as a point in $G(k,n)$), or equivalently most of the elements $V\in\cV(B_\de)$ lie in a small neighborhood of $\Pi(\subset \R^n)$ (if we view $V$ as a $k$-plane in $\R^n$). The second scenario is that we can find several ``quite transverse" patches $\cU_1,\dots, \cU_J\subset G(k,n)$ so that $\#(\cU_j\cap \cV(B_\de))\gtrsim \#\cV(B_\de)$ for $j=1,\dots,J$. The transverse condition on the patches allows us to use Theorem \ref{BLthm}. 
We call the first scenario narrow case, and the second scenario broad case.
We make our heuristic rigorous as follows.

Fix a large number $K\gg1$ which is to be determined later. We cover $G(k,n)$ by $\cQ_{K^{-n}}=\{Q_{K^{-n}}\}$ which are balls of radius $K^{-n}$ with bounded overlaps: 
\[ G(k,n)=\bigcup Q_{K^{-n}}. \]
Note that $\#\cQ_{K^{-n}}\lesssim (K^n)^{\dim G(k,n)}\le K^{n^3}$. We define the significant balls
\[ \cQ'=\{ Q_{K^{-n}}: \#(Q_{K^{-n}}\cap \cV(B_\de))\ge \frac{1}{K^{n^4}}\#\cV(B_\de) \}. \]
We have 
\[ \#\cV(B_\de)\le 2 \sum_{Q_{K^{-n}}\in \cQ'}\#(Q_{K^{-n}}\cap \cV(B_\de)). \]
Next by pigeonhole principle, there is a subset of $\cQ'$ denoted by $\cQ$, such that the balls in $\cQ$ are $100K^{-n}$-separated, $\#(Q_{K^{-n}}\cap \cV(B_\de))$ are all comparable for $Q_{K^{-n}}\in\cQ$, and
\[ \#\cV(B_\de)\lesssim \log K \sum_{Q_{K^{-n}}\in\cQ}\#(Q_{K^{-n}}\cap \cV(B_\de)). \]

There is the dichotomy: 

\medskip 

\fbox{The first scenario}: We say it is in the narrow case when there exists $\Pi\in G(d,n)$ so that half of balls in $\cQ$ is contained in $\cN_{CK^{-1}}(G(k,\Pi))$. Here, we view $G(k,\Pi)$ as a subset of $G(k,n)$ and $\cN_{C_1K^{-1}}(G(k,\Pi))$ is the $C_1K^{-1}$-neighborhood of $G(k,\Pi)$ in $G(k,n)$. $C_1$ is a large constant to be determined later.

\medskip
\fbox{The second scenario}: We say it is in the broad case when there exists 
\[\cU_k,\dots,\cU_{d+1}\in \cQ,\]
so that there does not exists a $\Pi\in G(d,n)$ such that all of these $\cU_j$  intersect $\cN_{100K^{-n}}(G(k,\Pi))$. In other words, for any $\Pi\in G(d,n)$, there exists a $\cU_j$ that does not intersect $\cN_{100K^{-n}}(G(k,\Pi))$. We remark that the subscript of $\cU_j$ ranges from $k$ to $d+1$ but not from $1$ to $d-k+2$, just to make the discussion below more convenient.

\bigskip

We want to show that if we are not in the first scenario, then we are in the second scenario. 
We let $\cG$ be the set of centers of the balls in $\cQ$, i.e., 
\[ \cQ=\{Q_{K^{-n}}(U):U\in\cG\}. \]
We just need to show there exists $U_k,\dots,U_{d+1}\in\cG$ so that there does not exists a $\Pi\in G(d,n)$ such that all these $U_j$ are contained in $\cN_{200K^{-n}}(G(k,\Pi))$.

If $U_k,\dots,U_{d+1}\in \cN_{200K^{-n}}(G(k,\Pi))$, then by Lemma \ref{comparablelem}, 
\begin{equation}\label{nothappen}
    U_k,\dots,U_{d+1}\subset N_{CK^{-n}}(\Pi)
\end{equation}
for some large constant $C$ depending on $k,n$.
To show the existence of $U_k,\dots,U_{d+1}$ that do not satisfy \eqref{nothappen}, we will inductively find $U_k,\dots,U_{d+1}$ and vectors $v_1,\dots,v_{d+1}$ in $\bigcup_{j=k}^{d+1}V_j$ such that $v_1\dots,v_{d+1}$ cannot all lie in $N_{CK^{-n}}(\Pi)$.

We observe the following fact: If the unit vectors $v_1,\dots,v_{d+1}$ lie in $CK^{-n}$-neighborhood of a $d$-dimensional plane $\Pi$, then the volume of the parallelepiped spanned by $\{v_1,\dots,v_{d+1}\}$ has the upper bound: 
\[\tx{Vol}_{d+1}(v_1,\dots,v_{d+1})\lesssim  K^{-n}.\] The proof is straightforward. Let $V^{d+1}=\tx{span}\{v_1,\dots,v_{d+1}\}$, then parallelepiped spanned by $\{v_1,\dots,v_{d+1}\}$ is contained in $N_{K^{-n}}(\Pi)\cap V^{d+1}\cap B^n(0,1)$. We get 
\begin{equation}
    \tx{Vol}_{d+1}(v_1,\dots,v_{d+1})\le \tx{Vol}_{d+1}\bigg( N_{K^{-n}}(\Pi)\cap B^n(0,1)\cap V^{d+1} \bigg).
\end{equation}
Noting that $N_{K^{-n}}(\Pi)\cap B^n(0,1)$ is morally a rectangle of dimensions \[\underbrace{1\times \dots\times 1}_{d \tx{~times} }\times \underbrace{K^{-n}\times \dots \times K^{-n}}_{(n-d)\tx{~times} }.\]
Its intersection with a $d+1$ dimensional plane has volume $\lesssim K^{-n}$.

Therefore, we just need to find $U_k,\dots,U_{d+1}\in\cG$ and vectors $v_1,\dots,v_{d+1}$ that lie in $\bigcup_{j=k}^{d+1} V_j$ such that
\[ \tx{Vol}_{d+1}(v_1,\dots,v_{d+1})\ge C' K^{-n}, \]
for some large constant $C'$. This will implies that $U_k,\dots,U_{d+1}$ cannot all lie in $\cN_{200K^{-n}}(G(k,\Pi))$.

This is done in the following way. 
We first choose a $U_k\in\cG$ and choose $v_1,\dots,v_k$ to be the orthonormal basis of $U_k$.
Next, we will inductively construct $U_{j}$ for $k<j\le d+1$ and a unit vector $v_j\in U_j$ so that the $j$-dimensional parallelepided spanned by $v_1,\dots,v_j$ has volume bigger than $\wt C^jK^{-j}$ for some large constant $\wt C$. Suppose we have constructed $\cU_1,\dots,\cU_{j-1}$ and $v_1,\dots,v_{j-1}$ with $\textup{Vol}_{j-1}(v_1,\dots,v_{j-1})\ge \wt C^{j-1}K^{j-1}$. Denote the $(j-1)$-dimensional plane $\textup{span}\{v_1,\dots,v_{j-1}\}$ by $\Pi_{j-1}$. For any $U\in\cG$, consider its relative position with $\Pi_{j-1}$. It suffices to find a unit vector $v\in U$ that is not contained in the $\wt C K^{-1}$-neighborhood of $\Pi_{j-1}$. The key observation is that: If any unit vector $v\in U$ is contained in the $\wt CK^{-1}$-neighborhood of $\Pi_{j-1}$, then $U$, as an element in $G(k,n)$, is contained in $\cN_{C_1 K^{-1}}(G(k,\Pi_{j-1}))$. (Here, $C_1$ is the constant in \fbox{The first scenario} which is determined after $\wt C$.) The proof is as follows. If any unit vector $v\in U$ is contained in the $\wt C K^{-1}$-neighborhood of $\Pi_{j-1}$, then if we view $U$ and $\Pi_{j-1}$ as subsets of $\R^n$, we see that $B(0,1)\cap U$ is contained in the $\wt C K^{-1}$-neighborhood of $\Pi_{j-1}$. If we denote the orthogonal projection of $U$ to $\Pi_{j-1}$ by $U'$, then $B(0,1)\cap U$ is contained in the $\wt C K^{-1}$-neighborhood of $U'$. By Lemma \ref{comparablelem}, $d(U,U')\lesssim C\wt C K^{-1}$. Since $U'\in G(k,\Pi_{j-1})$, we have $d(U,G(k,\Pi_{j-1}))\le C_1 K^{-1}$, where $C_1\sim C \wt C$ is a large constant.  In other words, all the $\cU\in\cQ$ are contained in $\cN_{C_1K^{-1}}(G(k,\Pi_{j-1}))$, meaning that we are in the first scenario.

\bigskip

Let us summarize the result.
Let $\cQ_{K^{-1}}=\{Q_{K^{-1}}\}$ be a covering of $G(k,n)$ by $K^{-1}$-balls.
If we are in the first scenario (broad case), then there exists $\Pi\in G(d,n)$ such that
\[ \#\cV(B_\de)\lesssim \log K \sum_{Q_{K^{-1}}\subset \cN_{C_1K^{-1}}(G(k,\Pi))}\#(Q_{K^{-1}}\cap \cV(B_\de)). \]
Since $G(k,\Pi)$ is a $k(d-k)$-dimensional submanifold of $G(k,n)$, we have 
\[\#\{Q_{K^{-1}}\subset \cN_{C_1K^{-1}}(G(k,\Pi))\}\lesssim K^{k(d-k)}. \]
Therefore, we have

\begin{equation}
    \#\cV(B_\de)^p\lesssim K^{k(d-k)(p-1)} (\log K)^p \sum_{Q_{K^{-1}}\in \cQ_{K^{-1}}}\#(Q_{K^{-1}}\cap \cV(B_\de))^p.
\end{equation}

We discuss the bound we have in the second scenario. Let $\cQ_{K^{-n}}$ be a 
set of $K^{-n}$-cubes that form a partition of $G(k,n)$. Define the following transverse $(d-k+2)$-tuples
\begin{align}
    \textup{Trans}^{d-k+2}:=\{ (\cU_k,\dots,\cU_{d+1})\in \cQ_{K^{-n}}^{d-k+2}:\cU_k,\dots,\cU_{d+1} \textup{~do~not~simultaneously}\\
   \nonumber \textup{~interset~}\cN_{100K^{-n}}(G(k,\Pi)) \textup{~for~any~} \Pi\in G(d,n)  \}.
\end{align}

Then if we are in the second scenario, we have
\begin{equation}
    \#\cV(B_\de)\lesssim K^{O(n^{10})} \sum_{(\cU_k,\dots,\cU_{d+1})\in\textup{Trans}^{d-k+2}}\prod_{j=k}^{d+1}(\#\cU_j\cap\cV(B_\de))^{\frac{1}{d-k+2}}.
    \end{equation}

Combining the broad case and narrow case, we obtain the following estimate.
\begin{align}
    \#\cV(B_\de)^p\lesssim K^{k(d-k)(p-1)}(\log K)^p\sum_{Q_{K^{-1}}\in\cQ_{K^{-1}}}\#(Q_{K^{-1}}\cap\cV(B_\de))^p\\
    \label{huge}+K^{pK^{n^3}} \sum_{(\cU_k,\dots,\cU_{d+1})\in\textup{Trans}^{d-k+2}}\prod_{j=k}^{d+1}(\#\cU_j\cap\cV(B_\de))^{\frac{p}{d-k+2}}.
\end{align}

\begin{remark}
    {\rm
     \eqref{huge} is obtained by using triangle inequality which results in a huge constant $K^{pK^{n^3}}$ depending on $K$. This constant comes from $\#\{(\cU_k,\dots,\cU_{d+1})\in\textup{Trans}^{d-k+2}\}$. However, it is allowable since it does not depend on $\de$.
    }
\end{remark}

Summing over $B_\de$ and noting that each $V_\de$ is roughly constant on any ball of radius $\de$, we obtain

\begin{align}
    \|\sum_{V\in\cV}1_{V_\de}\|_p^p\lesssim K^{k(d-k)(p-1)}(\log K)^p\sum_{Q_{K^{-1}}\in\cQ_{K^{-1}}}\| \sum_{V\in\cV, \dir(V)\in Q_{K^{-1}}}1_{V_{20\de}} \|_p^p\\
    +C_K\sum_{(\cU_k,\dots,\cU_{d+1})\in\textup{Trans}^{d-k+2}}\int_{[-1,1]^n}\prod_{j=k}^{d+1}(\sum_{V\in\cV, \dir(V)\in \cU_j}1_{V_{20\de}})^{\frac{p}{d-k+2}}.
\end{align}

To ease the notation, we may replace $V_{20\de}$ on the right hand side by $V_\de$. This may cost a lose of some constant factor, which is allowable. Therefore, we can assume we have

\begin{align}
    \label{1term}\|\sum_{V\in\cV}1_{V_\de}\|_p^p\lesssim K^{k(d-k)(p-1)}(\log K)^p\sum_{Q_{K^{-1}}\in\cQ_{K^{-1}}}\| \sum_{V\in\cV, \dir(V)\in Q_{K^{-1}}}1_{V_\de} \|_p^p\\
    \label{2term}+C_K\sum_{(\cU_k,\dots,\cU_{d+1})\in\textup{Trans}^{d-k+2}}\int_{[-1,1]^n}\prod_{j=k}^{d+1}(\sum_{V\in\cV, \dir(V)\in \cU_j}1_{V_\de})^{\frac{p}{d-k+2}}=:\textup{I}+\textup{II}.
\end{align}

\bigskip

Next, we will estimate I and II separately. We first look at II. Let $J=d-k+2$. For each $\cU_j$ ($k\le j\le d+1$), let $\cW_j=\{W^\perp:W\in \cU_j\}$. We see that $\cW_j\subset G(n-k,n)$. For simplicity, we denote the Brascamp-Lieb  constant (see \eqref{bl} and \eqref{bl0}) by
\begin{equation}\label{recallBL}
    \BL:=\BL((\cW_j)_{j=1}^J,p). 
\end{equation} 
Noting $p\le d-k+2=J$, by Theorem \ref{BLthm}, we have
\begin{equation}
    \int_{[-1,1]^n}\prod_{j=k}^{d+1}(\sum_{V\in\cV, \dir(V)\in \cU_j}1_{V_\de})^{\frac{p}{d-k+2}}\lesssim \de^{n-\e^2}\de^{-\BL}\prod_{j=1}^J\bigg(\#\cU_{j}\cap \cV\bigg)^{\frac{p}{J}}\le\de^{n-\e^2-\BL}(\#\cV)^p.
\end{equation}
Here, the implicit constant depends on $\e$ and $\{\cW_j\}$. $\{\cW_j\}$ further depends on the covering $\cQ_{K^{-n}}$. Since this covering is at the scale $K^{-n}$, we can simply denote this implicit constant by $C_{K,\e}$. 
Also, we choose $\de^{-\e^2}$ instead of $\de^{-\e}$ is to close the induction later.

Therefore,
\[ \tx{II}\le C_{K,\e} \de^{n-\e^2-\BL}(\#\cV)^p.\]
Here, $C_K$ may be another constant different from that in \eqref{2term}.
We claim that 
\begin{equation}\label{clm}
    \de^{n-\BL}(\#\cV)^p\lesssim \de^{-k(d-k)(p-1)}\sum_{V\in\cV}|V_\de|. 
\end{equation}
To prove this inequality, we first note that $|V_\de|\sim \de^{n-k}$ and by \eqref{condition} that $\#\cV\le \de^{-(k+1)(d-k)+\beta}$.
Therefore, \eqref{clm} is equivalent to
\[ \de^{n-\BL}\de^{-\big((k+1)(d-k)+\beta\big)(p-1)}\lesssim \de^{-k(d-k)(p-1)}\de^{n-k}. \]
It suffices to prove
\begin{equation}\label{boundBL}
    \BL\le d+\beta-(d-k+\beta)p. 
\end{equation} 
Recall the definition of $\BL$ in \eqref{recallBL}. We may assume
\[ \BL=\sup_{L\le \R^n}\bigg(\dim L-\frac{p}{J}\sum_{j=k}^{d+1}\dim \pi_{W_j}(L)\bigg), \]
where $W_j\in \cW_j$. We remind the reader that $\{W_j\}_{j=k}^{d+1}$ satisfy an important property: any $\Pi\in G(d,n)$ does not contain all the $W_j^\perp$. We will also use the following result.
\begin{lemma}\label{easylem}
    Let $W,L\le \R^n$ be two subspaces and $\dim L+\dim W\ge n$. Then,
     \[\dim\pi_{W}(L)\ge \dim W+\dim L-n.\] 
     The equality holds if and only if $L\supset W^\perp$. If $L\not\supset W^\perp$, then \[\dim\pi_{W}(L)\ge \dim W+\dim L-n+1.\]
\end{lemma}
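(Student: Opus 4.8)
The plan is to reduce everything to a single application of the rank--nullity theorem to the restriction of the orthogonal projection $\pi_W$ to the subspace $L$. The crucial observation is that $\ker \pi_W = W^\perp$, so the restricted linear map $\pi_W|_L \colon L \to W$ has kernel exactly $L \cap W^\perp$. Rank--nullity then gives the exact identity
\[ \dim \pi_W(L) = \dim L - \dim(L \cap W^\perp), \]
and the whole lemma becomes a matter of bounding $\dim(L\cap W^\perp)$ from above.

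First I would use the trivial bound $\dim(L \cap W^\perp) \le \dim W^\perp = n - \dim W$, which upon substitution into the identity yields $\dim \pi_W(L) \ge \dim W + \dim L - n$; the hypothesis $\dim L + \dim W \ge n$ is used only to ensure the right-hand side is nonnegative and hence meaningful. For the equality case, note that equality in $\dim(L\cap W^\perp)\le \dim W^\perp$ holds precisely when $L \cap W^\perp = W^\perp$, i.e. precisely when $L \supset W^\perp$, which by the identity is exactly when $\dim \pi_W(L) = \dim W + \dim L - n$. If instead $L \not\supset W^\perp$, then $L \cap W^\perp$ is a \emph{proper} subspace of $W^\perp$, so $\dim(L \cap W^\perp) \le \dim W^\perp - 1 = n - \dim W - 1$, and substituting back gives $\dim \pi_W(L) \ge \dim W + \dim L - n + 1$, as claimed.

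There is no serious obstacle here: this is a routine linear-algebra fact, and the only point deserving a moment's attention is the clean dichotomy for $\dim(L\cap W^\perp)$ — it is either all of $W^\perp$ (the $L\supset W^\perp$ case) or drops by at least one dimension — which is what produces the ``$+1$'' gain in the strict case that will later be exploited in the proof of \eqref{boundBL}.
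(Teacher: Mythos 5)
Your proof is correct and is essentially the same as the paper's: both apply rank--nullity to $\pi_W|_L$ and observe that $\ker(\pi_W|_L) = L\cap W^\perp \subset W^\perp$, with equality iff $L\supset W^\perp$. You have merely spelled out the kernel identification and the ``drops by at least one'' step a bit more explicitly than the paper does.
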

\begin{proof}
    We just look at the linear map $\pi_W|_L: L\rightarrow \pi_W(L)$. We have $\dim(\pi_W(L))=\dim(L)-\dim\textup{Ker}(\pi_W|_L)$. We note that $\dim\textup{Ker}(\pi_W|_L)\le \dim\textup{Ker}(\pi_W)=n-\dim W$, and $\dim\textup{Ker}(\pi_W|_L)= \dim\textup{Ker}(\pi_W)$ if and only if $L\supset W^\perp$.
\end{proof}

We return to the proof of \eqref{boundBL}.
We denote $l=\dim L$ and consider three cases.
\medskip

\begin{itemize}
    \item $l\ge d+1$: $\dim L-\frac{p}{J}\sum_{j=k}^{d+1}\dim \pi_{W_j}(L)\le l-p(n-k+l-n)=l-p(l-k)\le d+\beta-(d-k+\beta)p$, since $p>1$.
    \medskip
    \item $k\le l\le d$: Since $L$ cannot contain all the $W_j^\perp$, by Lemma \ref{easylem} we have \[\sum_{j=k}^{d+1}\dim\pi_{W_j}(L)\ge J(l-k)+1 .\]
    Therefore,
    $\dim L-\frac{p}{J}\sum_{j=k}^{d+1}\dim \pi_{W_j}(L)\le l-p(l-k)-\frac{p}{J}\le d+\beta-(d-k+\beta)p$. The last inequality is equivalent to $(d+\beta-l-J^{-1})p\le d+\beta-1$. This is true if either $d+\beta-l-J^{-1}\le 0$, or $p\le \frac{d+\beta-l}{d+\beta-l-J^{-1}}$.
    \medskip
    \item $l\le k-1$: We simply use $\dim L-\frac{p}{J}\sum_{j=k}^{d+1}\dim \pi_{W_j}(L)\le \dim L\le k-1\le d+\beta-(d-k+\beta)p$, where the last inequality is equivalent to
    \[ p\le \frac{d+1-k+\beta}{d-k+\beta}.\]  
\end{itemize}
We have finished the proof of claim \eqref{clm}. We obtain that 
\begin{equation}\label{estII}
    \tx{II}\le C_{K,\e}\de^{-\e^2-k(d-k)(p-1)}\sum_{V\in\cV}|V_\de|.
\end{equation}

\bigskip

We turn to the term $\tx{I}$ which is the right hand side of \eqref{1term}. For a fixed $Q_{K^{-1}}\in \cQ_{K^{-1}}$, we choose $V^\circ\in G(k,n)$ to be the center of $Q_{K^{-1}}$. Let $V^\circ_{K^{-1}}$ denote $N_{K^{-1}}(V^\circ)\cap B^n(0,1)$ which is a $K^{-1}$-slab in $\R^n$, centered at the origin. For any $V\in Q_{K^{-1}}$, we have $\dir(V)\cap B^n(0,1)\subset C V_{K^{-1}}^\circ$. Therefore, $V_\de$ is contained in a translation of $C V^\circ_{K^{-1}}$.

To estimate $\| \sum_{V\in\cV, \dir(V)\in Q_{K^{-1}}}1_{V_\de} \|_{L^p(B^n(0,1))}^p$, we will first partition the integration domain. We cover $B^n(0,1)$ by finitely overlapping slabs denoted by $\cT=\{\tau\}$. Here each $\tau$ is a translation of $ C V^\circ_{K^{-1}}$, and $\#\cT\sim K^{n-k}$.
Also, $\cT$ is chosen so that for any $V\in Q_{K^{-1}}$, $V_\de$ is contained in some $\tau$. If $V_\de$ is contained in $\tau$, than we associate $V_\de$ to $\tau$ and denote by $V_\de\subset \tau$. If there are multiple choices, we just associate $V_\de$ to one $\tau$. We have
\begin{equation}\label{ineq0}
    \| \sum_{V\in\cV, \dir(V)\in Q_{K^{-1}}}1_{V_\de} \|_{L^p(B^n(0,1))}^p=\| \sum_{\tau}\sum_{V\in\cV, \dir(V)\in Q_{K^{-1}},V_\de\subset \tau}1_{V_\de} \|_{L^p(B^n(0,1))}^p. 
\end{equation} 
We note that $\supp \left(\sum_{V\in\cV, \dir(V)\in Q_{K^{-1}},V_\de\subset \tau}1_{V_\de}\right)\subset \tau$ and $\cT=\{\tau\}$ are finitely overlapping, so the equation above is bounded by
\begin{equation}\label{ineq1}
    \lesssim \sum_\tau \|\sum_{V\in\cV, \dir(V)\in Q_{K^{-1}},V_\de\subset \tau}1_{V_\de}\|_{L^p(\tau)}^p. 
\end{equation} 
Next, we will do the rescaling for $\tau$. We first translate $\tau$ to the origin, and do the $K$-dilation
in the short directions of $\tau$ so that $\tau$ roughly becomes $B^n(0,1)$. We denote this rescaling by $\Phi_\tau$. After the rescaling, we see that every $V_\de\subset \tau$ becomes a $\de K\times \dots\times \de K\times 1\times \dots\times 1$-slab contained in $B^n(0,1)$. We can just assume it is $V'_{\de K}$, where $V'=\Phi_\tau(V)$. We denote $\cV'(\tau):=\{\Phi_\tau V: V_\de\subset \tau\}$.
We have
\begin{equation}\label{ineq2}
    \|\sum_{V\in\cV, \dir(V)\in Q_{K^{-1}},V_\de\subset \tau}1_{V_\de}\|_{L^p(\tau)}^p\sim K^{-(n-k)}\|\sum_{V'\in\cV'(\tau)}1_{V'_{\de K}}\|_{L^p(B^n(0,1))}^p 
\end{equation} 

Next, we check the spacing condition \eqref{condition} for $\cV'(\tau)$. We want to prove that for any ball $Q_r\subset A(k,n)$ of radius $r$ ($\de K\le r\le 1$), we have
\[ \#\{V'\in \cV'(\tau): V'\in Q_r\}\lesssim (r/\de K)^{(k+1)(d-k)+\beta}. \]
Suppose $W'\in A(k,n)$ is the center of $Q_r$. Then, $V'\in Q_r$ implies $V'_{\de K}\subset W'_{Cr}$ by Lemma \ref{comparablelem}. Let $W=\Phi_{\tau}^{-1}W'$.
If we rescale back, i.e., we apply $\Phi_\tau^{-1}$, then $W'_{Cr}$ becomes $W_{CrK^{-1}}$ which is a $CrK^{-1}$-slab. The condition $V'_{\de K}\subset W'_{Cr}$ becomes $V_{\de}\subset W_{CrK^{-1}}$. Again by Lemma \ref{comparablelem}, this condition implies $V\in Q_{C^2r K^{-1}}$ where $Q_{C^2 rK^{-1}}\subset A(k,n)$ is a ball centered at $W$.
In summary, We proved that
\[ \#\{V'\in \cV'(\tau): V'\in Q_r\}\le \#\{V\in \cV: V\in Q_{C^2 r K^{-1}}\}.  \]
By \eqref{condition}, it is $\lesssim (r/\de K)^{(k+1)(d-k)+\beta} $. 

\bigskip

\subsection{Induction on scales}
We are ready to prove \eqref{kakeyaineq}. We will induct on $\de$. Of course, \eqref{kakeyaineq} is true for small $\de$ if we choose $C_\e$ in \eqref{kakeyaineq} sufficiently large. Suppose \eqref{kakeyaineq} is true for numbers $\ge 2\de$. We will use induction hypothesis for the scale $\de K$ to bound \eqref{ineq2}. However, $\cV'(\tau)$ satisfies
\begin{equation}\label{condi}
    \#\{V'\in \cV'(\tau): V'\in Q_r\}\le C\cdot (r/\de K)^{(k+1)(d-k)+\beta}, 
\end{equation} 
where there is an additional constant $C$ compared with \eqref{condition}. We hope to partition $\cV'(\tau)$ into subsets so that each subset satisfies the inequality above with $C=1$. This is done by the following lemma.

\begin{lemma}\label{goodlem}
    Let $A\subset [0,1]^m$ be a finite set that satisfies
    \[ \#(A\cap Q_r)\le M (r/\de)^s, \]
    for any $Q_r$ being a ball of radius $r$ in $[0,1]^m$ for $\de\le r\le 1$. Then we can partition $A$ into $\lesssim 10^{5m}M^2$ subsets $A=\sqcup A'$ such that each $A'$ satisfies
    \[ \#(A'\cap Q_r)\le (r/\de)^s, \]
    for any $Q_r$ being a ball of radius $r$ for $\de\le r\le 1$.
\end{lemma}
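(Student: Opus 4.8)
The plan is to produce the partition by a colouring argument: colour the points of $A$ so that each colour class satisfies $\#(\cdot\cap Q_r)\le (r/\delta)^s$ for every $r\in[\delta,1]$, and to show that $O_m(M^2)$ colours suffice. Throughout I absorb dimensional constants into the factor $10^{5m}$; by a routine covering argument — passing between balls of radius $r\in[\delta,1]$ and dyadic cubes of comparable side, using the $O_m(1)$ translates of the dyadic grid so that every relevant ball sits inside a comparable dyadic cube — it is enough to control dyadic cubes of side $2^{-j}$ for $0\le j\le N$, where we may assume $\delta=2^{-N}$. Recovering the precise constant $1$ (rather than a dimensional constant) on the right-hand side of the conclusion needs a little extra care at the smallest scales but is routine.

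I would split the argument into two steps, which is where the two factors of $M$ come from. \emph{Step 1 (the finest scale).} Taking $r=\delta$ in the hypothesis shows every $\delta$-cube meets $A$ in $\lesssim_m M$ points. Extracting a maximal $\delta$-separated subset of $A$, then of what remains, and so on, one checks — using the hypothesis at scale $\delta$ together with a packing bound — that after $\le C_m M$ rounds nothing is left: if a point survives to round $k$ it has a distinct companion from each earlier round inside a small ball about it, so $k-1\lesssim_m M$. This partitions $A$ into $\le C_m M$ subsets, each meeting every $\delta$-cube in at most one point. It then suffices to fix one such subset $B$ — still satisfying the hypothesis with the same $M$, and now $\delta$-separated — and to partition it into $\le C_m M$ further pieces, each obeying the Frostman bound with constant $1$ at all scales $r\in[\delta,1]$; the product $C_m^2M^2$ is the asserted count.

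\emph{Step 2 (the coarser scales), and the main obstacle.} For $B$ the bound already holds at scale $\sim\delta$, and it remains to enforce it at the coarser dyadic scales while spending only $\le C_m M$ colours. I would do this by iteratively peeling off a maximal subset of $B$ that already satisfies the constant-$1$ bound at every scale, and bounding the number of peels: if $B=B_1\sqcup\dots\sqcup B_K$ is such a peeling and $x\in B_K$, then for each $i<K$ maximality furnishes a dyadic cube $Q_i\ni x$ of some side $r_i\ge\delta$ with $\#(B_i\cap Q_i)\gtrsim (r_i/\delta)^s$; since the $B_i$ are pairwise disjoint, summing these lower bounds over the indices $i$ with $r_i\le\rho$ and comparing with the hypothesis applied to the ball of radius $C\rho$ about $x$ controls $K$. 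The delicate point is precisely this count: carried out naively — by first pigeonholing the witness cubes $Q_i$ into dyadic scale bands and treating each band separately — it loses a factor equal to the number of bands, i.e.\ a factor $\log(1/\delta)$, which is not acceptable because the number of pieces must be independent of $\delta$. Removing that loss (for instance by exploiting the $\delta$-separation of $B$ to treat adjacent scales together, by choosing carefully the order in which points are examined, or by organising the colouring so that spatially separated cubes may reuse colours, which turns the estimate into a chromatic-number bound) is the heart of the matter, and is what pins the final count at $M^2$ rather than $M$; by contrast the geometric ingredients — the covering reductions, the packing bound, and the bookkeeping needed for the exact constant $1$ — are all standard.
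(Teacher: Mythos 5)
Your overall framework is the same as the paper's (reduce to dyadic cubes, handle the finest scale first, then handle the coarser scales with a second partition, multiplying the two counts to get $M^2$), and your Step~1 matches the paper's first stage. But you have correctly diagnosed that the entire difficulty of the lemma sits in Step~2, and you have not resolved it: the iterative peeling of maximal ``good'' subsets, counted band-by-band, genuinely does lose a $\log(1/\delta)$ factor, and none of the three fixes you float is carried far enough to see that it works. In fact the naive peeling count does not seem to be rescuable by simple bookkeeping: the witness cubes $Q_i$ for different peeled layers $B_i$ can all be centred at the same bad point $x$ and occur at genuinely different dyadic scales, and disjointness of the $B_i$ only controls how many witnesses occur \emph{within} a scale band, not across bands. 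So as written, the proof has a gap exactly where you say the heart of the matter lies.

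What the paper does instead is closest in spirit to your third suggestion (``turn it into a chromatic-number bound''), but with a specific bottom-up construction rather than a top-down peeling. Working inside a single piece $A'$ from Step~1, one builds, for each dyadic scale $r$ from $\delta$ up to $1$, a graph $G_r$ on $A'$ that is a disjoint union of cliques, with two invariants maintained inductively: every clique has order at most $C_m M$, and every dyadic $r$-cube contains at most $c_m(r/\delta)^s$ cliques. Passing from scale $r/2$ to scale $r$, inside each $r$-cube one greedily \emph{merges} pairs of cliques both of order at most $\tfrac12 C_m M$ until at most one ``small'' clique remains; then all but one clique in the cube has order $>\tfrac12 C_m M$, so the hypothesis $\#(A'\cap D_r)\le M(r/\delta)^s$ forces the number of cliques in $D_r$ to be $\le 1 + 2(r/\delta)^s/C_m$, which is $\le c_m(r/\delta)^s$ once $r$ is a bit larger than $\delta$ (the factor $10^{m/s}\delta$ threshold absorbs this). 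Crucially, the clique-count bound is enforced \emph{at every scale simultaneously} by the nesting $E_r\subset E_{r'}$ for $r<r'$, with no summation over scales and hence no $\log(1/\delta)$. Finally, since every clique in $G_1$ has order $\le C_m M$, a proper colouring with $\le C_m M$ colours (assign to each clique a bijection from its members to a set of $\le C_m M$ colours) gives the desired partition: each colour class $A''$ meets any $D_r$ in at most one point per clique, hence in at most the clique count $\le c_m(r/\delta)^s$ points. This is the idea your Step~2 needs; the peeling formulation does not supply it.
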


\begin{proof}
For convenience, we assume $\de$ is a dyadic number.
     For any dyadic number $r\in [\de,1]$, let $\cD_r=\{D_r\}$ be lattice cubes of length $r$ that form a partition of $[0,1]^m$. Note that every ball of radius $r_0$ is contained in $4^m$ dyadic cubes of length $r$, where $r\in[r_0/2, r_0]$ is dyadic. We just need to find a partition $A=\sqcup A''$ such that for each $A''$,
    \[ \#(A''\cap D_r)\le \max\{1, 10^{-m}(r/\de)^s\}. \]
We first partition $A$ into $\lesssim 10^{m}M$ subsets $\sqcup A'$ so that $\#(A'\cap D_r)\le 1$ for $r\le 10^{\frac{m}{s}}\de$. It remains to partition each $A'$ further into subsets $A'=\sqcup A''$ so that
\[ \#(A''\cap D_r)\le 10^{-m}(r/\de)^s, \tx{~for~}r>10^{\frac{m}{s}}\de. \]

Fix a $A'$ in the first stage of partition.    
    We will construct a sequence of graphs $G_r=(V_r,E_r)$ indexed by the dyadic numbers $r\in [\de,1]$, so that their vertex sets are $V_r=A'$ and their edge sets satisfy $E_r\subset E_{r'}$ if $r<r'$. Also, each $G_r$ is a disjoint union of cliques of order $\le 10^{3m}M$ and the number of cliques of $G_r$ in any dyadic cube $D_r$ is $\le 10^{-m}(r/\de)^s$. Here, a clique is a complete graph, and the order of a clique is the number of points in the clique.

    For $r\le 10^{\frac{m}{s}}\de$, we just let $E_r$ be empty. Suppose we have constructed for $r/2$, so $E_{r/2}$ is a disjoint union of cliques with order $\le 10^{3m}M$. We are going to construct $E_r$. Fix a $D_r$. Consider the cliques of $E_{r/2}$ that lie in $D_r$. If there are two cliques, both of which have order $\le \frac{1}{2}10^{3m}M$, we merge them to get a clique of order $\le 10^{3m}M$. We continue the process until there is at most one clique whose order is $\le \frac{1}{2}10^{3m}M$. We do this argument for each $D_r$, and we obtain $E_r$. We need to check that for each $D_r$, there are $\le 10^{-m}(r/\de)^s$ cliques. First of all,
    $\#(A'\cap D_r)\le M(r/\de)^s$. On the other hand, $\#(A'\cap D_r)\ge (\#(\tx{cliques~in~}D_r)-1)\cdot\frac{1}{2}10^{3m}M$.
    Therefore, $\#(\tx{cliques~in~}D_r)\le 10^{-m}(r/\de)^s$. We repeat the algorithm and finally obtain $G_1=(A',E_1)$.
    
     Since each clique in $G_1$ has order $\le 10^{3m}M$, we can partition $A'$ into $\le 10^{3m}M$ sets $A'=\sqcup A''$, so that for each $A''$ there does not exists two points in $A''$ that lie in the same clique.  Now, we have
    \[ \#(A''\cap D_r)\le \#(\tx{cliques~in~}D_r)\le 10^{-m}(r/\de)^s. \]
This finishes the proof.
    
\end{proof}

By the lemma, we can partition $\cV'(\tau)=\bigsqcup_j \cV'_{j}(\tau)$ into $O(1)$ subsets, so that \eqref{condi} holds for $\cV'_{j}(\tau)$ with $C=1$. Applying the induction hypothesis \eqref{kakeyaineq} and triangle inequality, we obtain
\[ \|\sum_{V'\in \cV'(\tau)}1_{V'_{\de K}}\|_{L^p(B^n(0,1))}^p\le C C_\e (\de K)^{-k(d-k)(p-1)-\e p}(\sum_{V'\in \cV'(\tau)} |V'_{\de K}|). \]
Summing over $\tau\in \cT$, we can bound the left hand side of \eqref{ineq0} as
\begin{align*}
     &\|\sum_{V\in \cV, \dir(V)\in Q_{K^{-1}}}1_{V_{\de}}\|_{L^p(B^n(0,1))}^p\\
     \le &CK^{-(n-k)} C_\e (\de K)^{-k(d-k)(p-1)-\e p}\sum_{V'\in \cV'(\tau)} |V'_{\de K}|\\
     \lesssim &C C_\e (\de K)^{-k(d-k)(p-1)-\e p}\sum_{V\in \cV, \dir(V)\in Q_{K^{-1}}} |V_{\de}|
\end{align*}
(The constant $C$ changes from line to line, but is independent of $K$ and $\de$.)

Summing over $Q_{K^{-1}}\in \cQ_{K^{-1}}$, we obtain
\[ \tx{I}\le C (\log K)^pK^{-\e p}  C_\e \de^{-(k(d-k)(p-1)-\e p}\sum_{V\in\cV}|V_\de|. \]
Combining with \eqref{estII}, we can bound the left hand side of \eqref{1term} by
\begin{align*}
\|\sum_{V\in\cV}1_{V_\de}\|_p^p\lesssim C (\log K)^pK^{-\e p}  C_\e \de^{-k(d-k)(p-1)-\e p}\sum_{V\in\cV}|V_\de|\\
+C_{K,\e}\de^{-\e^2-k(d-k)(p-1)}\sum_{V\in\cV}|V_\de|.
\end{align*} 
To close the induction, it suffices to make
\begin{equation}\label{1}
    C (\log K)^pK^{-\e p} \le 1/2 
\end{equation} 
and
\begin{equation}\label{2}
    C_{K,\e}\de^{-\e^2}\le \frac12 \de^{-\e p}C_\e.
\end{equation}
We first choose $K$ sufficiently large so that \eqref{1} holds. Then we choose $C_\e$ large enough so that \eqref{2} holds.

\section{Proof of Conjecture \ref{conj1}}\label{sec3}

It is shown in \cite[Section 3]{zahl2022unions} that Theorem \ref{kakeyathm} implies Conjecture \ref{conj1} in the case $k=1$. Actually, for all the $k$, the implication is the same. Here, we give a sketch of proof on why Theorem \ref{kakeyathm} implies Conjecture \ref{conj1}.
We skip the technical details on the measurability and focus on how the numerology works for general $k$.

\begin{proof}[A sketch of proof: Theorem \ref{kakeyathm} implies Conjecture \ref{conj1}]
We just need to consider the case when $\beta\in (0,1]$. If $\beta>1$, we just replace $\beta$ by $1$ and throw away some $k$-planes in $\cV$ to make $\dim (\cV)=(k+1)(d-k)+1$. 
If $\beta=0$, we just replace $\beta$ by $1$ and $d$ by $d-1$. Next, note that it suffices to prove a variant of \eqref{mainbound}, where the right hand side is replaced by $d+\beta-\e$ for arbitrary $\e>0$.

Fix such an $\e$. Let $X=\bigcup_{V\in\cV}V$. We will first localize $X$ inside $B^n(0,1)$.
Define
\[ \cV^N:=\{V\in\cV: \dist(0,V)< N/2  \}. \]
Note that $\cV^N$ ($N=1,2,\cdots$) is an ascending sequence that converges to $\cV$. Therefore, there exists $N$ such that
\[ \dim (\cV^N)\ge \dim (\cV)-\e/2. \]
Note that after the rescaling $x\mapsto x/N$, the Hausdorff dimensions of $\cV,\cV^N$ and  $X$ do not change.
We use $\cV_1$ to denote the $\cV^N$ after the rescaling. By abuse of notation, we still keep the notation for $\cV, X$ after the rescaling. We have
\[ \cV_1=\{V\in\cV: \dist(0,V)< 1/2\}\subset A(k,n), \]
and $\dim(\cV_1)\ge \dim(\cV)-\e/2$. By Frostman's lemma, there is a probability measure $\mu$ supported on $\cV_1$, such that for any $Q_r\subset A(k,n)$ being a ball of radius $r$ we have
\begin{equation}\label{frost}
    \mu(Q_r)\le C_0 r^{(k+1)(d-k)+\beta-\e}. 
\end{equation} 
We also define the localized version of $X$:
\[ X_1:=\bigcup_{V\in\cV_1}V\cap B^n(0,1). \]
We remark that for any $V\in\cV_1$,
\[ |V\cap B^n(0,1)|\gtrsim 1. \]
Our goal is to show 
\begin{equation}
    \dim(X_1)\ge d+\beta-O(\e).
\end{equation}

Let $k_0$ be a large integer that will be chosen later.
Cover $X_1$ by a union $\bigcup_{k=k_0}^\infty \bigcup_{B\in\cB_k}B$, where $\cB_k$ is a collection of balls of radius $2^{-k}$, and 
\begin{equation}\label{lowerbound}
    \sum_k 2^{-k(\dim X_1+\e)}\#\cB_k=:C_1<\infty.
\end{equation}

For each $V\in\cV_1$, there is an index $k=k(V)\ge k_0$, so that
\begin{equation}\label{lowerbound2}
    |V\cap \bigcup_{B\in\cB_k}B|\gtrsim \frac{1}{k^2}. 
\end{equation} 
For each index $k$, define 
\[\cV^{(k)}=\{V\in\cV_1:k(V)=k\}.\] 
Then 
\[\sum_{k\ge k_0}\mu(\cV^{(k)})=1, \]
so there exists an index $k_1\ge k_0$ with $\mu(\cV^{(k_1)})\ge \frac{1}{k_1^2}$. From now on, we just define $\de=2^{-k_1}$ and $E_\de=\bigcup_{B\in\cB_{k_1}}B.$

From \eqref{lowerbound}, we see that

\begin{equation}\label{lowerbd}
    |E_\de|=\#\cB_{k_1}2^{-k_1}\le C_1 \de^{n-\e-\dim X_1}.
\end{equation}

Noting that $\mu$ is a $\big((k+1)(d-k)-\e\big)$-dimensional Frostman measure (see \eqref{frost}), by \cite[Lemma 8]{dote2022exceptional}, there exists a subset $\cV_2\subset \cV^{(k_1)}$ that satisfies $\# \cV_2\gtrsim \frac{1}{k_1^2}\de^{\e-(k+1)(d-k)-\beta}$, and
\[ \#\{V\in\cV_2: V\in Q_r\}\lesssim \de^{-\e}(r/\de)^{(k+1)(d-k)+\beta} \]
for any $Q_r\subset A(k,n)$ being a ball of radius $r$ ($\de\le r\le 1$).

In what follows, we will write $A\lessapprox B$ to mean $A\lesssim |\log\de|^{O(1)} B$.

By Lemma \ref{goodlem}, we can find $\cV_3\subset \cV_2$ so that $\# \cV_3\gtrsim \de^{O(\e)-\big((k+1)(d-k)+\beta\big)}$, and $\cV_3$ satisfies
\[ \#\{V\in\cV_3: V\in Q_r\}\le (r/\de)^{(k+1)(d-k)+\beta}. \]
From \eqref{lowerbound2}, we see that for any $V\in\cV_3$, 
\begin{equation}\label{lowerbd2}
    |V_\de\cap E_\de|\gtrapprox 1\sim |V_\de|.
\end{equation} 

Now, we can apply Theorem \ref{kakeyathm} to $\cV_3$. 
By \eqref{kakeyaineq} and H\"older's inequality, we have
\[ \big(\int_{E_\de} \sum_{V\in\cV_3} 1_{V_\de}\big)\big| 
E_\de\big|^{-1/p'}\le \| \sum_{V\in\cV_3} 1_{V_\de}\|_p\le C_\e \de^{-\frac{k(d-k)}{p'}-\e}\big(\sum_{V\in\cV_3}|V_\de|\big)^{1/p}.    \]
Noting \eqref{lowerbd2}, this implies 
\[ | E_\de |\gtrsim \de^{k(d-k)+p'\e}\big(\sum_{V\in\cV_3}|V_\de|\big). \]
Combined with $\sum_{V\in\cV_3}|V_\de|=\#\cV_3 \de^{n-k}\gtrsim \de^{O(\e)-(k+1)(d-k)-\beta} \de^{n-k}$ and \eqref{lowerbd}, we obtain that
\[ \dim X_1\ge d+\beta-O(\e) \]
as $\de$ goes to $0$.
\end{proof}

\bibliographystyle{abbrv}
\bibliography{bibli}

\end{document}